\numberwithin{equation}{section}
\newtheorem{theorem}{Theorem}[section]
\newtheorem{lemma}{Lemma}[section]
\newtheorem{corollary}{Corollary}[section]
\newtheorem*{remark}{Remark}
\newcommand{\BS}{\boldsymbol}
\newcommand{\Rmnum}[1]{\expandafter\@slowromancap\romannumeral #1@}
\newcommand{\trp}{{\sf T}}
\journal{}
\def\ps@pprintTitle{%
   \let\@oddhead\@empty
   \let\@evenhead\@empty
   \def\@oddfoot{\reset@font\hfil\thepage\hfil}
   \let\@evenfoot\@oddfoot
}
\begin{document}
\begin{frontmatter}
\author[a]{ Osama Idais\corref{cor1}}
\cortext[cor1]{Corresponding author}
\ead{osama.idais@ovgu.de}
%\author[a]{ Rainer Schwabe}
%\ead{rainer.schwabe@ovgu.de}
\address[a]{\small Institute for Mathematical Stochastics, Otto-von-Guericke University Magdeburg, \\ \small PF 4120, D-39016 Magdeburg, Germany}

 \title {A note on locally optimal designs for generalized linear models with restricted support}

\begin{abstract}
Optimal designs for generalized linear models require a prior knowledge of the regression parameters. At certain values of the parameters we propose particular assumptions which allow to derive a locally optimal design for a model without intercept from a locally optimal design for the corresponding model with intercept and vice versa.  Applications to Poisson and logistic models and Extensions to nonlinear models are provided. 
\end{abstract}
\begin{keyword}
approximate design\sep information matrix\sep  model without intercept\sep optimal design\sep  saturated design.

\end{keyword}

\end{frontmatter}

\section{Introduction}
\label{}

The generalized linear model, GLM, is a generalization of the ordinary linear regression which allows continuous or discrete observations from  one-parameter exponential family distributions to be combined with explanatory variables (factors) via proper link functions (\citet{10.2307/2344614}). In GLM framework  logistic, probit, Poisson and  gamma models are included besides others (\citet{mccullagh1989generalized} and  \citet{dobson2018introduction}). Therefore, wide applications deal with GLMs such as social and educational sciences, clinical trials, insurance and  industry. \par

The information matrix for a GLM depends on the model parameters. Locally optimal designs under GLMs are derived at a certain value of the parameters (\citet{khuri2006}, \citet{atkinson2015designs}).  A possible procedure to overcome the complexity in deriving a locally optimal design for GLMs without intercept is to make use of an available locally optimal design for GLMs with intercept and vice versa. This procedure was suggested in  \cite{10.2307/24307120} to investigate the relation between optimal designs for mixture and for component amount models. Their result was extended under linear models in \cite{Li2005} to derive a D-optimal design for a non-intercept linear model from that for a linear model with intercept.  In contrast,  \cite{ZHANG2013196} provided specific conditions to derive D- and A-optimals for component amount models (with intercept) from  analogous optimal designs  for the corresponding mixture models (without intercept). In this paper we generalize their approaches for GLMs under D- and A-criteria and we introduce a more transparent proof based on The General Equivalence Theorem. 
This paper is organized as follows. In Section 2, the models and design optimality criteria are introduced.  In Section 3, we present the main results followed by applications to Poisson and logistic models in Section 4. Further extensions are given in Section 5.

 \section{Models and designs}
 Let $Y(\BS{x}_1), ... ,Y(\BS{x}_n)$ be independent response variables at $n$ experimental conditions $\BS{x}_1,\dots, \BS{x}_n$ which come from an experimental region   ${\cal X}\subseteq  \mathbb{R}^{\nu}, \nu\ge1$, i.e., $\BS{x}_i\in \mathcal{X}, i=1,\dots,n$.  Under generalized linear models with  the vector of model parameters $\BS{\beta}\in \mathbb{R}^{p}$  each observation $Y(\BS{x}_i)$ belongs to a  one-parameter exponential family distribution with expected mean $E(Y(\BS{x}_i),\BS{\beta})=\mu(\BS{x}_i,\BS{\beta})$ and variance  $\mathrm{var}(Y(\BS{x}_i),\BS{\beta})=a(\phi)V(\mu(\BS{x}_i,\BS{\beta}))$ where  $V(\mu(\BS{x}_i,\BS{\beta}))$ is a mean-variance function and $\phi$  is a dispersion parameter (see \citet{mccullagh1989generalized}, Section 2.2.2). 
  Let $\BS{f}(\BS{x}):{\cal X}\rightarrow \mathbb{R}^{p}$ be a $p$-dimensional regression function written as  $\BS{f}(\BS{x})=(f_{1}(\BS{x}),\dots, f_{p}(\BS{x}))^\trp$. To assure estimability of the parameters the components $f_{1}(\BS{x}),\dots, f_{p}(\BS{x})$ are assumed to be real-valued continuous linearly independent functions on $\mathcal{X}$. The expected mean $\mu(\BS{x}_i,\BS{\beta})$ is related to a linear predictor  $\eta(\BS{x}_i,\BS{\beta})=\BS{f}^\trp(\BS{x}_i)\BS{\beta}$ via a one-to-one and differentiable 
 link function $g$, i.e., $\eta(\BS{x}_i,\BS{\beta})=g(\mu(\BS{x}_i,\BS{\beta}))$, $i=1,\dots,n$.  We can define the  intensity function for each point $\BS{x}_i \in \mathcal{X}$ as
\begin{equation}
 u(\BS{x}_i,\BS{\beta})=\Big(a(\phi)V\bigl(g^{-1}\big(\BS{f}^\trp(\BS{x}_i)\BS{\beta}\big)\bigr)\Big)^{-1} \Big(g^{\prime}\bigl(g^{-1}\big(\BS{f}^\trp(\BS{x}_i)\BS{\beta}\big) \bigr)\Big)^{-2}\,\,\,(1\le i \le n) \label{eq2.3}
  \end{equation}
 which is positive and depends on the value of linear predictor $\BS{f}^\trp(\BS{x}_i)\BS{\beta}$ (\citet{atkinson2015designs}). 
The Fisher information matrix for a GLM  can be given in the form $\BS{M}(\BS{x}_i,\BS{\beta})=u(\BS{x}_i,\BS{\beta})\,\BS{f}(\BS{x}_i)\,\BS{f}^\trp(\BS{x}_i)$ for all  $i=1,\dots, n$ (see \citet{fedorov2013optimal}, Subsection 1.3.2). Define the function  $\BS{f}_{\BS{\beta}}(\BS{x}_i)=u^{\frac{1}{2}}(\BS{x}_i,\BS{\beta})\BS{f}(\BS{x}_i)$ then the Fisher information matrix may rewrite as $\BS{M}(\BS{x}_i,\BS{\beta})=\BS{f}_{\BS{\beta}}(\BS{x}_i)\BS{f}_{\BS{\beta}}^{\trp}(\BS{x}_i)$ for each $i=1,\dots,n$. The latter form is appropriate for other nonlinear models and will appear frequently in the paper. For the whole experimental conditions $\BS{x}_1, \dots, \BS{x}_n$ the Fisher information matrix can be obtained by  $
\BS{M}(\BS{x}_1,\dots,\BS{x}_n,\BS{\beta})=\sum_{i=1}^{n}\BS{M}(\BS{x}_i,\BS{\beta})$.\par

In this article, we focus on approximate designs $\xi$ defined on the experimental region ${\cal X}$ with finite and mutually distinct support points  $\BS{x}_1, \BS{x}_2,\dots,\BS{x}_r$ and the corresponding weights  $\omega_1, \omega_2,\dots, \omega_r>0$ such that  $\sum_{i=1}^{r} \omega_i=1$ ( see \cite{silvey1980optimal}, p.15).
The set ${\rm supp}(\xi)=\{\BS{x}_1,\BS{x}_2, \dots,\BS{x}_r\}$ is called the support of $\xi$. The information matrix of a design $\xi$  at a parameter point $\BS{\beta}$ is defined by
\begin{eqnarray}
\BS{M}(\xi, \BS{\beta})=\int_{\mathcal{X}} \BS{M}(\BS{x}, \BS{\beta})\, \xi({\rm d}\BS{x})=\sum_{i=1}^{r}\omega_i \BS{M}(\BS{x}_i, \BS{\beta}).\label{eq2-3}
\end{eqnarray}
Optimal designs derived under specific optimality criteria.  Throughout, we restrict to the common D- and A-criteria. Denote by ``$\det$'' and ``${\rm tr}$'' the determinant and the trace of a matrix, respectively.   A design $\xi^*$ is called  locally D-optimal (at $\BS{\beta}$) if it minimizes $\det\bigl(\BS{M}^{-1}(\xi, \BS{\beta})\bigr)$  over all designs $\xi$ whose information matrix (at $\BS{\beta}$) is nonsingular. Similarly, a design $\xi^*$ is called locally A-optimal (at $\BS{\beta}$) if it minimizes   ${\rm tr}\bigl(\BS{M}^{-1}(\xi, \BS{\beta})\bigr)$  over all designs $\xi$ whose information matrix (at $\BS{\beta}$) is nonsingular. The General Equivalence Theorem can be used to investigate the optimality of a design with respect to  D-criterion  and  A-criterion (see \cite{silvey1980optimal}, p.40, p.48 and  p.54).  Let $\BS{\beta}$ be a given parameter point and let $\xi^*$ be a design with nonsingular information matrix $\BS{M}(\xi^*, \BS{\beta})$. The design $\xi^*$ is locally D-optimal (at $\BS{\beta}$) if and only if
\begin{equation}
u(\BS{x},\BS{\beta})\BS{f}^\trp(\BS{x})\BS{M}^{-1}(\xi^*, \BS{\beta}) \BS{f}(\BS{x})\leq p \,\,\mbox{for all } \BS{x} \in \mathcal{X}. \label{Equiv-D}
\end{equation}
The design $\xi^*$ is locally A-optimal (at $\BS{\beta}$) if and only if 
\begin{equation}
u(\BS{x},\BS{\beta})\BS{f}^\trp(\BS{x})\BS{M}^{-2}(\xi^*, \BS{\beta})\BS{f}(\BS{x})\leq 
\mathrm{tr}\bigl(\BS{M}^{-1}(\xi^*, \BS{\beta})\bigr) \,\, \mbox{for all } 
\BS  x \in \mathcal{X}. \label{Equiv-A} 
\end{equation}

\begin{remark} \label{rem2.3.1.}
The maximum of  inequality (\ref{Equiv-D}) or (\ref{Equiv-A}) achieves at the support points of any D- or A-optimal deigns, respectively. The left hand side of each inequality is called  the sensitivity function.
\end{remark}

\section{Main results}
In the following we distinguish between the model with an explicit intercept $\mathcal{M}$, say and  the corresponding model without an explicit intercept $\widetilde{\mathcal{M}}$, say. We modify our notations and thus these  models; $\widetilde{\mathcal{M}}$ and  $\mathcal{M}$ are (with out loss of generality) characterized in the following. 
 \begin{equation*}
 \widetilde{\mathcal{M}}\,:\,\,\, \Tilde{\eta}=\BS{f}^\trp(\BS{x})\BS{\Tilde{\beta}}\,\, \mbox{ where }\,\, \,\, \BS{x} \in\mathcal{\widetilde{X}} 
\end{equation*}
and  $\BS{\Tilde{\beta}}=(\beta_1,\dots,\beta_\nu)^\trp$. Denote the intensity function by $\Tilde{u}(\BS{x},\BS{\Tilde{\beta}})$ and let $\Tilde{u}_{0}=\Tilde{u}(\BS{0},\BS{\Tilde{\beta}})$. Here we assume there is no constant (intercept) term explicitly involved in the present model, i.e., none of the regression
 components of the   $\nu$ real-valued function $\BS{f}(\BS{x})$ is constant equal to $1$. 
Denote  $\BS{f}_{\BS{\Tilde{\beta}}}(\BS{x})= \Tilde{u}^{\frac{1}{2}}(\BS{x},\BS{\Tilde{\beta}})\BS{f}(\BS{x})=({f}^{(1)}_{\Tilde{\BS{\beta}}},\dots,{f}^{(\nu)}_{\Tilde{\BS{\beta}}})^{\trp}$ and thus the information matrix of $\xi$ on $\mathcal{\widetilde{X}}$  under model $ \widetilde{\mathcal{M}}$  is written as  
 \begin{equation*}
 \BS{\Tilde{M}}\big(\xi,\BS{\Tilde{\beta}}\big)=\int_{\mathcal{\widetilde{X}}} \BS{f}_{\BS{\Tilde{\beta}}}(\BS{x}) \BS{f}_{\BS{\Tilde{\beta}}}^\trp(\BS{x})\, \xi(\mathrm{d}\BS{x}).
 \end{equation*}

The corresponding model  $\mathcal{M}$ is defined by including the constant $1$ and  the intercept parameter $\beta_0$ into the linear predictor of the generalized linear model as in the following.
\begin{equation*}
 \mathcal{M}\,:\,\,\, \eta=\big(1,\BS{f}^\trp(\BS{x})\big)\BS{\beta}=\beta_0+\BS{f}^\trp(\BS{x})\BS{\Tilde{\beta}}\,\,\mbox{ where }\,\, \BS{x} \in\mathcal{X}
\end{equation*}
and $\BS{\beta}=(\beta_0,\BS{\Tilde{\beta}}^\trp)^\trp$. Denote the intensity function by $u(\BS{x},\BS{\beta})$ and let $u_{0}=u(\BS{0},\BS{\beta})$. 
Denote the function $\BS{f}_{\BS{\beta}}(\BS{x})=u^{\frac{1}{2}}(\BS{x},\BS{\beta})\BS{f}(\BS{x})=(f^{(1)}_{\BS{\beta}},\dots,f^{(\nu)}_{\BS{\beta}})^{\trp}$.  So we can write  $u^{\frac{1}{2}}(\BS{x},\BS{\beta})\big(1,\BS{f}^\trp(\BS{x})\big)^\trp=\big(u^{\frac{1}{2}}(\BS{x},\BS{\beta}),\BS{f}^{\trp}_{\BS{\beta}}(\BS{x}) \big)^\trp$. Define $\Xi_{0}$ to be the set of all designs on $\mathcal{X}$ for model $\mathcal{M}$ such that $ \BS{0}\in \mathrm{supp}(\xi)$ and there exist a constant vector $\BS{c}$ such that $\BS{c}^\trp\BS{f}(\BS{x})=1$ for all $\BS{x}\in \mathrm{supp}(\xi)\setminus\{\BS{0}\}$, i.e., 
\[
\Xi_{0}=\left\{\xi: \xi \mbox{ on } \mathcal{X} \mbox{  with }\BS{0}\in \mathrm{supp}(\xi) \mbox{ and }\exists\, \BS{c}\in \mathbb{R}^{\nu} \ni \BS{c}^\trp\BS{f}(\BS{x})=1\, \forall \BS{x}\in \mathrm{supp}(\xi)\setminus\{\BS{0}\} \right\}.
\]
Then the information matrix of $\xi\in\Xi_{0}$ under model $ \mathcal{M}$  reads as 
\begin{equation*} 
\BS{M}(\xi,\BS{\beta})=\int_{\mathcal{X}} \big(u^{\frac{1}{2}}(\BS{x},\BS{\beta}),\BS{f}_{\BS{\beta}}^\trp(\BS{x})\big)^\trp \big(u^{\frac{1}{2}}(\BS{x},\BS{\beta}),\BS{f}_{\BS{\beta}}^\trp(\BS{x})\big)\, \xi(\mathrm{d}\BS{x}).
\end{equation*}

In the following  we give sufficient conditions under which the locally D- resp. A-optimal design at a parameter point $\BS{\Tilde{\beta}}$  for model $\widetilde{\mathcal{M}}$ can be obtained from the locally D- resp. A-optimal design from $\Xi_0$ at a parameter point $\BS{\beta}=(\beta_0,\BS{\Tilde{\beta}}^\trp)^\trp$  for the corresponding model $\mathcal{M}$  by simply removing the origin point  from its support points and renormalizing the weights of the remaining support points and vice versa. To this end,  for a design $\xi\in \Xi_{0}$  define $\xi_{-\BS{0}}$ on $\mathcal{\widetilde{X}}\subseteq\mathcal{X}$ to be the  conditional measure of $\xi$ given $\BS{x}\neq \BS{0}$. So we get  $\mathrm{supp}(\xi)=\mathrm{supp}(\xi_{-\BS{0}})\cup \{\BS{0}\}$. Let $\xi_{\BS{0}}$ denotes the one point design supported by the origin point $\BS{0}$, then we can write $ \xi=\omega\,\xi_{\BS{0}}+(1-\omega)\,\xi_{-\BS{0}}$.  Assume that  for a given parameter point $\BS{\beta}=(\beta_0,\BS{\Tilde{\beta}}^\trp)^\trp$ we have  $u(\BS{x},\BS{\beta})=\Tilde{u}(\BS{x},\BS{\Tilde{\beta}})$ which yields  $\BS{f}_{\BS{\beta}}(\BS{x})=\BS{f}_{\BS{\Tilde{\beta}}}(\BS{x})$ and  $\BS{M}(\xi,\BS{\beta})=\BS{M}(\xi,\BS{\Tilde{\beta}})$ with $u_{0}=\Tilde{u}_{0}$. In particular, let  $\BS{f}_{\BS{\Tilde{\beta}}}(\BS{0})=\BS{0}$ then we find
\begin{equation*}
\BS{M}(\xi,\BS{\Tilde{\beta}})=\,\left(\begin{array}{cc} m_{1,1}(\xi,\BS{\Tilde{\beta}}) & (1-\omega)\,\BS{\Tilde{m}}^\trp(\xi_{-\BS{0}},\BS{\Tilde{\beta}})\\[0.3cm]    (1-\omega)\, \BS{\Tilde{m}}(\xi_{-\BS{0}},\BS{\Tilde{\beta}})& (1-\omega)\, \BS{\Tilde{M}}(\xi_{-\BS{0}},\BS{\Tilde{\beta}})\end{array} \right),
\end{equation*} 
where
\begin{align*}
&m_{1,1}(\xi,\BS{\Tilde{\beta}})=\int_{\mathcal{X}} \Tilde{u}(\BS{x},\BS{\Tilde{\beta}})\,\xi (\mathrm{d}\BS{x} ),\,\,
\BS{\Tilde{m}}(\xi_{-\BS{0}},\BS{\Tilde{\beta}})=\int_{\mathcal{\widetilde{X}}} \Tilde{u}^{\frac{1}{2}}(\BS{x},\BS{\Tilde{\beta}})\BS{f}_{\BS{\Tilde{\beta}}}(\BS{x}) \,\xi_{-\BS{0}} (\mathrm{d}\BS{x} )\,\, and \\
&\BS{\Tilde{M}}(\xi_{-\BS{0}},\BS{\Tilde{\beta}})= \int_{\mathcal{\widetilde{X}}} \BS{f}_{\BS{\Tilde{\beta}}}(\BS{x})\BS{f}_{\BS{\Tilde{\beta}}}^\trp(\BS{x}) \,\xi_{-\BS{0}} (\mathrm{d}\BS{x}).
\end{align*}
Note that the submatrix $\BS{\Tilde{M}}(\xi_{-\BS{0}},\BS{\Tilde{\beta}})$ is the information matrix  of $\xi_{-\BS{0}}$ for model  $\widetilde{\mathcal{M}}$. Furthermore,  $m_{1,1}(\xi,\BS{\Tilde{\beta}})=\omega \Tilde{u}_{0}+\widetilde{m}^{\circ}(\xi_{-\BS{0}},\BS{\Tilde{\beta}})$ where $\widetilde{m}^{\circ}(\xi_{-\BS{0}},\BS{\Tilde{\beta}})=\int_{\mathcal{\widetilde{X}}}\Tilde{u}(\BS{x},\BS{\Tilde{\beta}})\,\xi_{-\BS{0}} (\mathrm{d}\BS{x} )$. Since there exist a constant vector $\BS{c}$ such that $\BS{c}^\trp\BS{f}(\BS{x})=1$ for all $\BS{x}\in \mathrm{supp}(\xi)\setminus\{\BS{0}\}$, it is straightforward to verify the following  
\begin{align*}
&\BS{c}^\trp\BS{\Tilde{m}}(\xi_{-\BS{0}},\BS{\Tilde{\beta}})=\widetilde{m}^{\circ}(\xi_{-\BS{0}},\BS{\Tilde{\beta}})\,\,\mbox{ and  } \BS{\Tilde{M}}^{-1}(\xi_{-\BS{0}},\BS{\Tilde{\beta}})\BS{\Tilde{m}}(\xi_{-\BS{0}},\BS{\Tilde{\beta}})=\BS{c}\,\, \mbox{ thus }\\
&\BS{\Tilde{m}}^\trp(\xi_{-\BS{0}},\BS{\Tilde{\beta}})  \BS{\Tilde{M}}^{-1}(\xi_{-\BS{0}},\BS{\Tilde{\beta}})\BS{\Tilde{m}}(\xi_{-\BS{0}},\BS{\Tilde{\beta}})=\widetilde{m}^{\circ}(\xi_{-\BS{0}},\BS{\Tilde{\beta}}).
\end{align*}
As a result we get
 \begin{equation}
\BS{M}^{-1}(\xi,\BS{\Tilde{\beta}})=\left(\begin{array}{cc} \frac{1}{\omega\,\Tilde{u}_{0}}  & -\frac{\BS{c}^\trp}{\omega\,\Tilde{u}_{0}}\\ [0.3cm]   -\frac{\BS{c}}{\omega\,\Tilde{u}_{0}}& \frac{1}{1-\omega}\,  \BS{\Tilde{M}}^{-1}(\xi_{-\BS{0}},\BS{\Tilde{\beta}})+\frac{\BS{c}\BS{c}^\trp}{\omega\,\Tilde{u}_{0}}\end{array} \right). \label{eq3.15}
\end{equation}

\begin{lemma} \label{lemm3.3-1}
Consider design  $\xi^*\in \Xi_{0}$ for model $\mathcal{M}$.  Let a parameter point $\BS{\beta}=(\beta_0,\BS{\Tilde{\beta}}^\trp)^\trp$ be given such that $u(\BS{x},\BS{\beta})=\Tilde{u}(\BS{x},\BS{\Tilde{\beta}})$ for all $\BS{x}\in \mathcal{\widetilde{X}}\subseteq \mathcal{X}$ and $\BS{f}_{\BS{\Tilde{\beta}}}(\BS{0})=\BS{0}$. Then the design $\xi^*$ is  locally D-optimal (at $\BS{\beta}$) if  it assigns weight $\omega=(\nu+1)^{-1}$ to the origin $\BS{0}$.
\end{lemma}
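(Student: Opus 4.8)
The plan is to establish the optimality of $\xi^*$ by a direct appeal to the General Equivalence Theorem, i.e. by checking that the sensitivity function of $\mathcal{M}$ stays below $p=\nu+1$ on all of $\mathcal{X}$. The model $\mathcal{M}$ has regression vector $\big(1,\BS{f}^\trp(\BS{x})\big)^\trp$ and $\nu+1$ parameters, and under the standing assumption $u(\BS{x},\BS{\beta})=\Tilde{u}(\BS{x},\BS{\Tilde{\beta}})$ its inverse information matrix is precisely the block matrix recorded in (\ref{eq3.15}). First I would insert (\ref{eq3.15}) into the left-hand side of (\ref{Equiv-D}) and expand the quadratic form. Grouping the cross terms through $\BS{c}^\trp\BS{f}(\BS{x})$, the blocks telescope into the decomposition
\begin{equation*}
\Tilde{u}(\BS{x},\BS{\Tilde{\beta}})\,\big(1,\BS{f}^\trp(\BS{x})\big)\BS{M}^{-1}(\xi^*,\BS{\beta})\big(1,\BS{f}^\trp(\BS{x})\big)^\trp
=\frac{\Tilde{u}(\BS{x},\BS{\Tilde{\beta}})\big(1-\BS{c}^\trp\BS{f}(\BS{x})\big)^2}{\omega\,\Tilde{u}_{0}}
+\frac{\Tilde{d}(\BS{x})}{1-\omega},
\end{equation*}
where $\Tilde{d}(\BS{x})=\Tilde{u}(\BS{x},\BS{\Tilde{\beta}})\BS{f}^\trp(\BS{x})\BS{\Tilde{M}}^{-1}(\xi^*_{-\BS{0}},\BS{\Tilde{\beta}})\BS{f}(\BS{x})$ is the sensitivity function of $\xi^*_{-\BS{0}}$ under $\widetilde{\mathcal{M}}$. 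This identity is the backbone of the proof: it isolates the intercept (origin) direction from the information supplied by the non-origin points.

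I would then calibrate the weight at the support of $\xi^*$. At the origin the hypothesis $\BS{f}_{\BS{\Tilde{\beta}}}(\BS{0})=\BS{0}$ gives $\BS{f}(\BS{0})=\BS{0}$, hence $\Tilde{d}(\BS{0})=0$ and the right-hand side reduces to $1/\omega$; this equals $\nu+1$ exactly when $\omega=(\nu+1)^{-1}$. At every non-origin support point the membership $\xi^*\in\Xi_{0}$ forces $\BS{c}^\trp\BS{f}(\BS{x})=1$, so the first summand vanishes, while the (assumed) local D-optimality of $\xi^*_{-\BS{0}}$ for $\widetilde{\mathcal{M}}$, where the parameter count is $\nu$, gives $\Tilde{d}(\BS{x})=\nu$; with $1-\omega=\nu/(\nu+1)$ the right-hand side again equals $\nu+1$. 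Thus the equivalence inequality is met with equality throughout $\mathrm{supp}(\xi^*)$, which simultaneously certifies the support and pins the weight to $(\nu+1)^{-1}$.

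The decisive and hardest step is to bound the same expression by $\nu+1$ at an arbitrary $\BS{x}\in\mathcal{X}$. After substituting $\omega=(\nu+1)^{-1}$ this amounts to
\begin{equation*}
\Tilde{d}(\BS{x})+\frac{\nu\,\Tilde{u}(\BS{x},\BS{\Tilde{\beta}})\big(1-\BS{c}^\trp\BS{f}(\BS{x})\big)^2}{\Tilde{u}_{0}}\le\nu .
\end{equation*}
The optimality of $\xi^*_{-\BS{0}}$ alone only supplies the weaker bound $\Tilde{d}(\BS{x})\le\nu$, so the nonnegative intercept term has to be absorbed by the slack $\nu-\Tilde{d}(\BS{x})$. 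I expect the real work to lie here: one must argue that wherever $\Tilde{d}$ comes close to its maximum $\nu$ the factor $1-\BS{c}^\trp\BS{f}(\BS{x})$ is correspondingly small, i.e. that the maximum of the $\widetilde{\mathcal{M}}$-sensitivity is attained only on the set $\{\BS{c}^\trp\BS{f}=1\}$ carrying the non-origin support. This is exactly the point at which the geometry of $\mathcal{X}$ (and, in the applications, the explicit form of the intensity) enters, and it is where I would concentrate the estimates. As an independent cross-check on the weight I would also evaluate $\det\BS{M}(\xi^*,\BS{\Tilde{\beta}})$ from the same block decomposition: it factors as a constant times $(1-\omega)^{\nu}\omega$, whose unique maximiser over $\omega\in(0,1)$ is again $(\nu+1)^{-1}$.
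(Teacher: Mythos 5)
Your proposal is correct, and its core is exactly the paper's argument: insert the block inverse (\ref{eq3.15}) into the sensitivity function of (\ref{Equiv-D}), observe that $\BS{f}_{\BS{\Tilde{\beta}}}(\BS{0})=\BS{0}$ kills every term except $\Tilde{u}_{0}\big(\omega\Tilde{u}_{0}\big)^{-1}=1/\omega$, and equate this to $\nu+1$ using the fact that the sensitivity function attains its maximum at the support points of an optimal design. Where you differ is in scope and in honesty about what is actually established. The paper stops after the origin computation; in effect the lemma only extracts the \emph{necessary} weight $\omega=(\nu+1)^{-1}$ from the behaviour of the sensitivity function at $\BS{0}$, and makes no attempt at the global bound $\psi(\BS{x},\xi^*)\le\nu+1$ on all of $\mathcal{X}$. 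You correctly identify that this global bound is the genuinely hard part, that D-optimality of $\xi^*_{-\BS{0}}$ only yields $\Tilde{d}(\BS{x})\le\nu$ and leaves the intercept term $\Tilde{u}(\BS{x},\BS{\Tilde{\beta}})\big(1-\BS{c}^\trp\BS{f}(\BS{x})\big)^2/(\omega\Tilde{u}_{0})$ to be absorbed by the slack; this is precisely condition (\ref{eq3.16}), which the paper postpones to Theorem \ref{theo3.3.1.}(2) rather than proving inside the lemma, so the ``hardest step'' you flag is not missing from your write-up so much as deliberately excluded from the paper's. Your telescoped decomposition of the quadratic form into the $(1-\BS{c}^\trp\BS{f}(\BS{x}))^2$ term plus $\Tilde{d}(\BS{x})/(1-\omega)$ is the same identity the paper uses later to pass from (\ref{eq3.17}) to (\ref{eq3.18}). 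Finally, your determinant cross-check is valid and is the one genuinely different ingredient: the Schur complement of the block form gives $\det\BS{M}(\xi^*,\BS{\Tilde{\beta}})=\omega(1-\omega)^{\nu}\,\Tilde{u}_{0}\det\BS{\Tilde{M}}(\xi^*_{-\BS{0}},\BS{\Tilde{\beta}})$, whose unique maximizer in $\omega$ is $(\nu+1)^{-1}$; this pins the weight for fixed $\xi^*_{-\BS{0}}$ without any appeal to the equivalence theorem and is arguably cleaner than the paper's route.
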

\begin{proof} 
Under the assumptions given in the lemma we obtain $\BS{M}^{-1}(\xi^*,\BS{\Tilde{\beta}})$ from (\ref{eq3.15}).  Then  the sensitivity function obtained from condition (\ref{Equiv-D}) of The Equivalence Theorem is given by 
\begin{align*}
\psi(\BS{x},\xi^*)&=u(\BS{x},\BS{\Tilde{\beta}})\big(1,\BS{f}^\trp(\BS{x})\big)\BS{M}^{-1}(\xi^*,\BS{\Tilde{\beta}})\big(1,\BS{f}^\trp(\BS{x})\big)^\trp\\
&=u(\BS{x},\BS{\Tilde{\beta}})\Big(\BS{f}^\trp(\BS{x})\bigl(  \frac{1}{1-\omega}\,  \BS{\Tilde{M}}^{-1}(\xi_{-\BS{0}}^*,\BS{\Tilde{\beta}})+\frac{\BS{c}\BS{c}^\trp}{\omega\,\Tilde{u}_{0}} \bigr)\BS{f}(\BS{x})-2 \frac{\BS{c}^\trp}{\omega\,\Tilde{u}_{0}}\BS{f}(\BS{x})+\big(\omega \Tilde{u}_{0}\big)^{-1}\Big)\\
&=\BS{f}_{\BS{\Tilde{\beta}}}^\trp(\BS{x})\bigl(  \frac{1}{1-\omega}\,  \BS{\Tilde{M}}^{-1}(\xi_{-\BS{0}}^*,\BS{\Tilde{\beta}})+\frac{\BS{c}\BS{c}^\trp}{\omega\,\Tilde{u}_{0}} \bigr)\BS{f}_{\BS{\Tilde{\beta}}}(\BS{x})-2 \frac{\BS{c}^\trp}{\omega\,\Tilde{u}_{0}}\BS{f}_{\BS{\Tilde{\beta}}}(\BS{x})+u(\BS{x},\BS{\Tilde{\beta}})\big(\omega \Tilde{u}_{0}\big)^{-1}
\end{align*}
 Since $\BS{f}_{\BS{\Tilde{\beta}}}(\BS{0})=\BS{0}$ we have $\psi(\BS{0},\xi^*)=\Tilde{u}_{0}\big(\omega \Tilde{u}_{0}\big)^{-1}$ and according to Remark \ref{rem2.3.1.} $\xi^*$ is locally D-optimal if $ \Tilde{u}_{0}\big(\omega \Tilde{u}_{0}\big)^{-1}=\nu+1$ which holds true  if  $\omega=(\nu+1)^{-1}$.
\end{proof}

\begin{theorem} \label{theo3.3.1.}
Consider design $\xi^*\in \Xi_{0}$ for model $\mathcal{M}$. Let the design $\xi^*_{-\BS{0}}$ on $\mathcal{\widetilde{X}}$  be the conditional measure of $\xi^*$ given $\BS{x}\neq \BS{0}$.  Let a parameter point $\BS{\beta}=(\beta_0,\BS{\Tilde{\beta}}^\trp)^\trp$ be given such that $u(\BS{x},\BS{\beta})=\Tilde{u}(\BS{x},\BS{\Tilde{\beta}})$ for all $\BS{x}\in \mathcal{\widetilde{X}}$. Assume that $\mathcal{\widetilde{X}}\subseteq \mathcal{X}$ and  $\BS{f}_{\BS{\Tilde{\beta}}}(\BS{0})=\BS{0}$. Let ${\xi^*=(1/(\nu+1))\,\xi_{\BS{0}}+(\nu/(\nu+1))\,\xi^*_{-\BS{0}}}$.  Then \\
(1) If $\xi^*$ is locally D-optimal  (at $\BS{\beta}$) for model $\mathcal{M}$ then $\xi^*_{-\BS{0}}$ is locally D-optimal  (at $\BS{\Tilde{\beta}}$) for model $\widetilde{\mathcal{M}}$ .\\
(2) If  $\xi^*_{-\BS{0}}$ is  locally D-optimal (at $\BS{\Tilde{\beta}}$) for model $\widetilde{\mathcal{M}}$ and  
\begin{align}
\BS{f}^{\trp}_{\BS{\Tilde{\beta}}}(\BS{x})\BS{\Tilde{M}}^{-1}(\xi^*_{-\BS{0}},\BS{\Tilde{\beta}})\BS{f}_{\BS{\Tilde{\beta}}}(\BS{x})\le \nu\Big( 1-\frac{(\BS{c}^\trp\BS{f}_{\BS{\Tilde{\beta}}}(\BS{x})- \Tilde{u}^{\frac{1}{2}}(\BS{x},\BS{\Tilde{\beta}}))^2}{\Tilde{u}_{0}}\Big)\,\,\forall \BS{x}\in \mathcal{X} \label{eq3.16}
  \end{align}
  then $\xi^*$ is locally D-optimal  (at $\BS{\beta}$) for model $\mathcal{M}$.
\end{theorem}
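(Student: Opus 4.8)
The plan is to show that, under the stated hypotheses, the D-criterion sensitivity function of $\xi^*$ in model $\mathcal{M}$ splits into the sensitivity function of $\xi^*_{-\BS{0}}$ in model $\widetilde{\mathcal{M}}$ plus a non-negative correction term, so that the Equivalence-Theorem conditions for the two models are tied together by precisely inequality (\ref{eq3.16}). Both implications then fall out of reading that single inequality in the two directions.

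First I would recompute the sensitivity function $\psi(\BS{x},\xi^*)$ from condition (\ref{Equiv-D}) using the block inverse (\ref{eq3.15}), but keeping the full dependence on $\BS{x}$ instead of evaluating at $\BS{0}$ as in Lemma \ref{lemm3.3-1}. Collecting the three scalar contributions over the common denominator $\omega\,\Tilde{u}_0$ and using $u(\BS{x},\BS{\Tilde{\beta}})=\Tilde{u}(\BS{x},\BS{\Tilde{\beta}})$ together with $\BS{c}^\trp\BS{f}_{\BS{\Tilde{\beta}}}(\BS{x})=\Tilde{u}^{\frac12}(\BS{x},\BS{\Tilde{\beta}})\,\BS{c}^\trp\BS{f}(\BS{x})$, the bracketed expression becomes a perfect square and I obtain the identity
\begin{equation*}
\psi(\BS{x},\xi^*)=\frac{1}{1-\omega}\,\BS{f}^\trp_{\BS{\Tilde{\beta}}}(\BS{x})\BS{\Tilde{M}}^{-1}(\xi^*_{-\BS{0}},\BS{\Tilde{\beta}})\BS{f}_{\BS{\Tilde{\beta}}}(\BS{x})+\frac{1}{\omega\,\Tilde{u}_0}\bigl(\Tilde{u}^{\frac12}(\BS{x},\BS{\Tilde{\beta}})-\BS{c}^\trp\BS{f}_{\BS{\Tilde{\beta}}}(\BS{x})\bigr)^2 .
\end{equation*}
Inserting $\omega=(\nu+1)^{-1}$, so that $1/(1-\omega)=(\nu+1)/\nu$ and $1/(\omega\,\Tilde{u}_0)=(\nu+1)/\Tilde{u}_0$, and then dividing the Equivalence-Theorem bound $\psi(\BS{x},\xi^*)\le p=\nu+1$ by $\nu+1$, a short rearrangement shows that $\psi(\BS{x},\xi^*)\le\nu+1$ holds at a given $\BS{x}$ if and only if inequality (\ref{eq3.16}) holds at that $\BS{x}$.

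With this equivalence established, both parts are immediate. For (1), D-optimality of $\xi^*$ delivers (\ref{eq3.16}) for every $\BS{x}\in\mathcal{X}$; since $\Tilde{u}_0>0$ and the squared term is non-negative, discarding it gives $\BS{f}^\trp_{\BS{\Tilde{\beta}}}(\BS{x})\BS{\Tilde{M}}^{-1}(\xi^*_{-\BS{0}},\BS{\Tilde{\beta}})\BS{f}_{\BS{\Tilde{\beta}}}(\BS{x})\le\nu$ for all $\BS{x}\in\widetilde{\mathcal{X}}\subseteq\mathcal{X}$, which is exactly condition (\ref{Equiv-D}) for D-optimality of $\xi^*_{-\BS{0}}$ in $\widetilde{\mathcal{M}}$ (with $p=\nu$). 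For (2), D-optimality of $\xi^*_{-\BS{0}}$ makes $\BS{\Tilde{M}}(\xi^*_{-\BS{0}},\BS{\Tilde{\beta}})$ nonsingular, hence by the block form of (\ref{eq3.15}) $\BS{M}(\xi^*,\BS{\beta})$ is nonsingular and the Equivalence Theorem applies; feeding (\ref{eq3.16}) back through the identity yields $\psi(\BS{x},\xi^*)\le\nu+1$ on all of $\mathcal{X}$, so $\xi^*$ is locally D-optimal for $\mathcal{M}$.

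The only genuine work is the algebraic identity of the first step, and the delicate point is tracking the intensity factors correctly: because $\BS{f}_{\BS{\Tilde{\beta}}}(\BS{x})=\Tilde{u}^{\frac12}(\BS{x},\BS{\Tilde{\beta}})\BS{f}(\BS{x})$, the pure-intercept contribution $u(\BS{x},\BS{\Tilde{\beta}})(\omega\,\Tilde{u}_0)^{-1}$, the cross term, and the $\BS{c}\BS{c}^\trp$ term must combine into $(\Tilde{u}^{\frac12}(\BS{x},\BS{\Tilde{\beta}})-\BS{c}^\trp\BS{f}_{\BS{\Tilde{\beta}}}(\BS{x}))^2/(\omega\,\Tilde{u}_0)$ rather than a naive $(1-\BS{c}^\trp\BS{f}(\BS{x}))^2$. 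A useful consistency check is that at $\BS{x}=\BS{0}$ the hypothesis $\BS{f}_{\BS{\Tilde{\beta}}}(\BS{0})=\BS{0}$ forces both sides of (\ref{eq3.16}) to vanish, matching the equality $\psi(\BS{0},\xi^*)=\nu+1$ of Lemma \ref{lemm3.3-1} and Remark \ref{rem2.3.1.}. Once the square is identified, the two implications are just the forward and reverse readings of (\ref{eq3.16}), with the inclusion $\widetilde{\mathcal{X}}\subseteq\mathcal{X}$ carrying part (1).
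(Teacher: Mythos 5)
Your proposal is correct and follows essentially the same route as the paper: both expand the sensitivity function via the block inverse (\ref{eq3.15}) with $\omega=(\nu+1)^{-1}$, isolate the perfect square $(\BS{c}^\trp\BS{f}_{\BS{\Tilde{\beta}}}(\BS{x})-\Tilde{u}^{\frac12}(\BS{x},\BS{\Tilde{\beta}}))^2/\Tilde{u}_0$, and read the resulting equivalence between the condition (\ref{Equiv-D}) for $\mathcal{M}$ and inequality (\ref{eq3.16}) in the two directions. Your explicit identity for $\psi(\BS{x},\xi^*)$ is just a cleaner packaging of the paper's chain of equivalent inequalities leading to (\ref{eq3.18}).
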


\begin{proof}
\underbar{Ad ($1$)}\,\,Let $\xi^*=(1/(\nu+1))\,\xi_{\BS{0}}+(\nu/(\nu+1))\,\xi^*_{-\BS{0}} \in \Xi_{0}$  be  locally D-optimal  (at $\BS{\beta}$) on $\mathcal{X}$ for model $\mathcal{M}$. We want to proof that  $\xi^*_{-\BS{0}}$ on $\mathcal{\widetilde{X}}$ is locally D-optimal (at $\BS{\Tilde{\beta}}$) for model $\widetilde{\mathcal{M}}$. By condition (\ref{Equiv-D}) of The Equivalence Theorem  we guarantee at $\BS{\beta}=(\beta_0,\BS{\Tilde{\beta}}^\trp)^\trp$ that 
 \begin{equation}
u(\BS{x},\BS{\beta})\big(1,\BS{f}^\trp(\BS{x})\big)\BS{M}^{-1}(\xi^*,\BS{\beta})\big(1,\BS{f}^\trp(\BS{x})\big)^\trp \le \nu+1 \,\,\forall \BS{x}\in \mathcal{X},  \label{eq3.17}
\end{equation}
where, at $\BS{\beta}=(\beta_0,\BS{\Tilde{\beta}}^\trp)^\trp$, ${u}(\BS{x},\BS{\beta})= \Tilde{u}(\BS{x},\BS{\Tilde{\beta}})$ and $\BS{f}_{\BS{\beta}}(\BS{x})=\BS{f}_{\BS{\Tilde{\beta}}}(\BS{x})$ for all $\BS{x}\in \widetilde{\mathcal{X}}$ with $\widetilde{\mathcal{X}}\subseteq \mathcal{X}$. So $\BS{M}^{-1}\big(\xi^*,\BS{\beta}\big)=\BS{M}^{-1}\big(\xi^*,\BS{\Tilde{\beta}}\big)$ which is given by (\ref{eq3.15}) with  $\omega=1/(\nu+1)$.  Then inequality (\ref{eq3.17}) is equivalent to 
\begin{align*}
&\BS{f}^{\trp}_{\BS{\Tilde{\beta}}}(\BS{x})\Big( \frac{\nu+1}{\nu}\,\BS{\Tilde{M}}^{-1}(\xi^*_{-\BS{0}},\BS{\Tilde{\beta}})+\frac{(\nu+1)\BS{c}\BS{c}^\trp}{\Tilde{u}_{0}}\Big)\BS{f}_{\BS{\Tilde{\beta}}}(\BS{x})\nonumber \\
&\hspace{25ex} -\frac{2(\nu+1)\BS{c}^\trp\BS{f}_{\BS{\Tilde{\beta}}}(\BS{x})+(\nu+1)\Tilde{u}(\BS{x},\BS{\Tilde{\beta}})}{\Tilde{u}_{0}}\le \nu+1 \,\,\,\,\forall \BS{x}\in \widetilde{\mathcal{X}}.
\end{align*}
Elementary computations show that  the above inequality is equivalent to
\begin{align}
&\BS{f}^{\trp}_{\BS{\Tilde{\beta}}}(\BS{x}) \BS{\Tilde{M}}^{-1}(\xi^*_{-\BS{0}},\BS{\Tilde{\beta}})\BS{f}_{\BS{\Tilde{\beta}}}(\BS{x}) +\frac{\nu\,(\BS{c}^\trp\BS{f}_{\BS{\Tilde{\beta}}}(\BS{x})-\Tilde{u}^{\frac{1}{2}}(\BS{x},\BS{\Tilde{\beta}}))^2}{\Tilde{u}_{0}}\le \nu  \,\,\,\,\forall \BS{x}\in \widetilde{\mathcal{X}}.\label{eq3.18}\\
&\mbox{ Since }  \frac{\nu\,(\BS{c}^\trp\BS{f}_{\BS{\Tilde{\beta}}}(\BS{x})-\Tilde{u}^{\frac{1}{2}}(\BS{x},\BS{\Tilde{\beta}}))^2}{\Tilde{u}_{0}}\ge0, \mbox{ (\ref{eq3.18}) is equivalent to }\nonumber\\
&\BS{f}^{\trp}_{\BS{\Tilde{\beta}}}(\BS{x}) \BS{\Tilde{M}}^{-1}(\xi^*_{-\BS{0}},\BS{\Tilde{\beta}})\BS{f}_{\BS{\Tilde{\beta}}}(\BS{x})\le  \nu   \,\,\,\,\forall \BS{x}\in \widetilde{\mathcal{X}}. \nonumber
\end{align}
and so $\xi^*_{-\BS{0}}$ is locally D-optimal (at $\BS{\Tilde{\beta}}$) by condition (\ref{Equiv-D}) of The Equivalence Theorem.\\
\underbar{Ad ($2$)} Let $\xi^*_{-\BS{0}}$ on $\widetilde{\mathcal{X}}$ is locally D-optimal (at $\BS{\Tilde{\beta}}$) for model $\widetilde{\mathcal{M}}$. Under the assumptions stated in the theorem,  to show that $\xi^*$ from $\Xi_0$ on $\mathcal{X}$ is locally D-optimal  (at $\BS{\beta}$) for model $\mathcal{M}$ we investigate condition (\ref{Equiv-D}) of The Equivalence Theorem  which is  given above by  
(\ref{eq3.17}) and is also equivalent to (\ref{eq3.18}) at $\BS{\beta}$. Hence, (\ref{eq3.18})  holds true by condition ($\ref{eq3.16}$). Of course, because $ \xi^*_{-\BS{0}}$ is locally D-optimal  inequality ($\ref{eq3.16}$) becomes an equality at each design point of $\xi^*_{-\BS{0}}$ which surely is a design point of $\xi^*$ and since $\omega=1/(\nu+1)$ the equality also holds at the origin point $\BS{0}$.
\end{proof}

Next we introduce analogous result for the A-optimality. As ${\rm tr}\big(\BS{c}\BS{c}^\trp\big)=\BS{c}^\trp\BS{c}$ we obtain from (\ref{eq3.15}) 
\begin{align}
{\rm tr}\bigl(\BS{M}^{-1}(\xi,\BS{\Tilde{\beta}})\bigr)=\frac{1}{\Tilde{u}_{0}}\Bigg(\sqrt{\BS{c}^\trp\BS{c}+1} +\sqrt{{\rm tr}\bigl(\BS{\Tilde{M}}^{-1}(\xi_{-\BS{0}},\BS{\Tilde{\beta}})\bigr)}\Bigg)^2. \label{eq3.20}
\end{align}
Also from (\ref{eq3.15}) we get 
\begin{equation}
\BS{M}^{-2}(\xi,\BS{\Tilde{\beta}})=\left(\begin{array}{cc} \frac{\BS{c}^\trp\BS{c}+1}{\omega^2\,\Tilde{u}_{0}^2}  & -\frac{(\BS{c}^\trp\BS{c}+1)\BS{c}^\trp}{\omega^2\,\Tilde{u}_{0}^2}-\frac{\BS{c}^\trp\BS{\Tilde{M}}^{-1}(\xi_{-\BS{0}},\BS{\Tilde{\beta}})}{(1-\omega)\omega\Tilde{u}_{0}}\\ [0.3cm]   -\frac{\BS{c}(\BS{c}^\trp\BS{c}+1)}{\omega^2\,\Tilde{u}_{0}^2}-\frac{\BS{\Tilde{M}}^{-1}(\xi_{-\BS{0}},\BS{\Tilde{\beta}})\BS{c}}{(1-\omega)\omega\Tilde{u}_{0}}& \frac{(\BS{c}^\trp\BS{c}+1)\BS{c}\BS{c}^\trp}{\omega^2\,\Tilde{u}_{0}^2}+\frac{2\BS{\Tilde{M}}^{-1}(\xi_{-\BS{0}},\BS{\Tilde{\beta}})\BS{c}\BS{c}^\trp}{(1-\omega)\omega\Tilde{u}_{0}}+\frac{\BS{\Tilde{M}}^{-2}(\xi_{-\BS{0}},\BS{\Tilde{\beta}})}{(1-\omega)^2}
\end{array} \right). \label{eq3-inv}
\end{equation}

\begin{lemma} \label{lemm3.3-2}
Consider design  $\xi^*\in \Xi_{0}$ for model $\mathcal{M}$.  Let a parameter point $\BS{\beta}=(\beta_0,\BS{\Tilde{\beta}}^\trp)^\trp$ be given such that $u(\BS{x},\BS{\beta})=\Tilde{u}(\BS{x},\BS{\Tilde{\beta}})$ for all $\BS{x}\in \mathcal{\widetilde{X}}\subseteq \mathcal{X}$ and $\BS{f}_{\BS{\Tilde{\beta}}}(\BS{0})=\BS{0}$. Denote  ${\widetilde{\tau}={\rm tr}\bigl(\BS{\Tilde{M}}^{-1}(\xi_{-\BS{0}}^*,\BS{\Tilde{\beta}})\bigr)}$.  Then the design $\xi^*$ is  locally A-optimal (at $\BS{\beta}$) if it assigns weight $\omega$, below, to the origin $\BS{0}$;
 \[
\omega=\frac{\sqrt{\BS{c}^\trp\BS{c}+1}}{\sqrt{\BS{c}^\trp\BS{c}+1}+\sqrt{\Tilde{u}_{0}\,\widetilde{\tau}}}.
\]
\end{lemma}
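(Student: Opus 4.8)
The plan is to follow the proof of Lemma~\ref{lemm3.3-1} verbatim in structure, replacing the D-sensitivity by the A-sensitivity of condition~(\ref{Equiv-A}) and letting the representation (\ref{eq3-inv}) of $\BS{M}^{-2}$ play the role that (\ref{eq3.15}) played there. Under the stated assumptions one has $u(\BS{x},\BS{\beta})=\Tilde{u}(\BS{x},\BS{\Tilde{\beta}})$ and $\BS{f}_{\BS{\beta}}(\BS{x})=\BS{f}_{\BS{\Tilde{\beta}}}(\BS{x})$ on $\mathcal{\widetilde{X}}$, so $\BS{M}^{-1}(\xi^*,\BS{\beta})=\BS{M}^{-1}(\xi^*,\BS{\Tilde{\beta}})$ and $\BS{M}^{-2}(\xi^*,\BS{\beta})=\BS{M}^{-2}(\xi^*,\BS{\Tilde{\beta}})$ are furnished by (\ref{eq3.15}) and (\ref{eq3-inv}) with the still unknown origin weight $\omega$. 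Accordingly I would write the A-sensitivity function as
\[
\phi(\BS{x},\xi^*)=u(\BS{x},\BS{\Tilde{\beta}})\big(1,\BS{f}^\trp(\BS{x})\big)\BS{M}^{-2}(\xi^*,\BS{\Tilde{\beta}})\big(1,\BS{f}^\trp(\BS{x})\big)^\trp .
\]

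First I would evaluate $\phi$ at the origin. Since $\BS{f}_{\BS{\Tilde{\beta}}}(\BS{0})=\BS{0}$, the scaled augmented vector collapses to $u^{\frac12}(\BS{0},\BS{\Tilde{\beta}})\big(1,\BS{f}^\trp(\BS{0})\big)^\trp=\big(\Tilde{u}_0^{\frac12},\BS{0}^\trp\big)^\trp$, so only the leading $(1,1)$ entry of $\BS{M}^{-2}(\xi^*,\BS{\Tilde{\beta}})$ in (\ref{eq3-inv}) contributes and
\[
\phi(\BS{0},\xi^*)=\Tilde{u}_0\cdot\frac{\BS{c}^\trp\BS{c}+1}{\omega^2\,\Tilde{u}_0^2}=\frac{\BS{c}^\trp\BS{c}+1}{\omega^2\,\Tilde{u}_0}.
\]

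Next I would invoke Remark~\ref{rem2.3.1.}: because $\BS{0}$ is a support point of $\xi^*$, local A-optimality forces the sensitivity to meet its bound there, i.e.\ $\phi(\BS{0},\xi^*)=\mathrm{tr}\big(\BS{M}^{-1}(\xi^*,\BS{\Tilde{\beta}})\big)$. Reading the trace off (\ref{eq3.15}) gives $\mathrm{tr}\big(\BS{M}^{-1}(\xi^*,\BS{\Tilde{\beta}})\big)=(\BS{c}^\trp\BS{c}+1)/(\omega\,\Tilde{u}_0)+\widetilde{\tau}/(1-\omega)$, the $\omega$-dependent trace underlying (\ref{eq3.20}). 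Equating this with the displayed $\phi(\BS{0},\xi^*)$ and clearing denominators reduces the support-point condition to $(\BS{c}^\trp\BS{c}+1)(1-\omega)^2=\Tilde{u}_0\,\widetilde{\tau}\,\omega^2$; taking positive square roots yields $\sqrt{\BS{c}^\trp\BS{c}+1}\,(1-\omega)=\sqrt{\Tilde{u}_0\,\widetilde{\tau}}\;\omega$, which rearranges to the asserted weight $\omega=\sqrt{\BS{c}^\trp\BS{c}+1}\big/\big(\sqrt{\BS{c}^\trp\BS{c}+1}+\sqrt{\Tilde{u}_0\,\widetilde{\tau}}\big)$, manifestly in $(0,1)$.

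The step I expect to be the main obstacle is conceptual rather than computational. In Lemma~\ref{lemm3.3-1} the equivalence bound is the constant $p=\nu+1$, so equating it to the origin sensitivity determines $\omega$ at once; here the corresponding bound $\mathrm{tr}(\BS{M}^{-1})$ is itself a function of $\omega$, so both sides of the support-point equality vary with $\omega$ and the condition becomes a genuine quadratic equation whose admissible root must be singled out and checked to lie in $(0,1)$. I would also flag, exactly as in the D-case, that this argument only pins down the origin weight necessary for A-optimality; the global verification of (\ref{Equiv-A}) away from $\BS{0}$ is the content supplied separately (the A-analogue of the inequality in condition~(\ref{eq3.16})).
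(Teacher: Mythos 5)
Your proposal is correct and follows essentially the same route as the paper: expand the A-sensitivity using the blockwise expression (\ref{eq3-inv}) for $\BS{M}^{-2}$, use $\BS{f}_{\BS{\Tilde{\beta}}}(\BS{0})=\BS{0}$ to reduce the sensitivity at the origin to $(\BS{c}^\trp\BS{c}+1)/(\omega^2\,\Tilde{u}_{0})$, and equate it to $\mathrm{tr}\bigl(\BS{M}^{-1}(\xi^*,\BS{\Tilde{\beta}})\bigr)$ via Remark~\ref{rem2.3.1.}. The only (cosmetic) difference is that you solve the resulting quadratic in $\omega$ explicitly, whereas the paper substitutes the closed-form trace (\ref{eq3.20}); your version is marginally cleaner in that it keeps the $\omega$-dependence of the trace visible throughout.
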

\begin{proof} 
Under the assumptions given in the lemma we obtain $\BS{M}^{-2}(\xi^*,\BS{\Tilde{\beta}})$ from (\ref{eq3-inv}). Then  the sensitivity function obtained from condition (\ref{Equiv-A}) of The Equivalence Theorem  is given by 
\begin{align*}
\psi(\BS{x},\xi^*)&=u(\BS{x},\BS{\Tilde{\beta}})\big(1,\BS{f}^\trp(\BS{x})\big)\BS{M}^{-2}(\xi^*,\BS{\Tilde{\beta}})\big(1,\BS{f}^\trp(\BS{x})\big)^\trp\\
&=u(\BS{x},\BS{\Tilde{\beta}})\Big(\BS{f}^\trp(\BS{x})\Big(  \frac{(\BS{c}^\trp\BS{c}+1)\BS{c}\BS{c}^\trp}{\omega^2\,\Tilde{u}_{0}^2}+\frac{2\BS{\Tilde{M}}^{-1}(\xi_{-\BS{0}}^*,\BS{\Tilde{\beta}})\BS{c}\BS{c}^\trp}{(1-\omega)\omega\Tilde{u}_{0}}+\frac{\BS{\Tilde{M}}^{-2}(\xi_{-\BS{0}}^*,\BS{\Tilde{\beta}})}{(1-\omega)^2}\Big)\BS{f}(\BS{x})\\
&-2\Big(\frac{(\BS{c}^\trp\BS{c}+1)\BS{c}^\trp}{\omega^2\,\Tilde{u}_{0}^2}-\frac{\BS{c}^\trp\BS{\Tilde{M}}^{-1}(\xi_{-\BS{0}}^*,\BS{\Tilde{\beta}})}{(1-\omega)\omega\Tilde{u}_{0}}\Big)\BS{f}(\BS{x})+(\BS{c}^\trp\BS{c}+1)\big(\omega \Tilde{u}_{0}\big)^{-2}\Big)\\
&=\BS{f}_{\BS{\Tilde{\beta}}}^\trp(\BS{x})\Big(  \frac{(\BS{c}^\trp\BS{c}+1)\BS{c}\BS{c}^\trp}{\omega^2\,\Tilde{u}_{0}^2}+\frac{2\BS{\Tilde{M}}^{-1}(\xi_{-\BS{0}}^*,\BS{\Tilde{\beta}})\BS{c}\BS{c}^\trp}{(1-\omega)\omega\Tilde{u}_{0}}+\frac{\BS{\Tilde{M}}^{-2}(\xi_{-\BS{0}}^*,\BS{\Tilde{\beta}})}{(1-\omega)^2}\Big)\BS{f}_{\BS{\Tilde{\beta}}}(\BS{x})\\
&-2\Big(\frac{(\BS{c}^\trp\BS{c}+1)\BS{c}^\trp}{\omega^2\,\Tilde{u}_{0}^2}-\frac{\BS{c}^\trp\BS{\Tilde{M}}^{-1}(\xi_{-\BS{0}}^*,\BS{\Tilde{\beta}})}{(1-\omega)\omega\Tilde{u}_{0}}\Big)\BS{f}_{\BS{\Tilde{\beta}}}(\BS{x})+u(\BS{x},\BS{\Tilde{\beta}})(\BS{c}^\trp\BS{c}+1)\big(\omega \Tilde{u}_{0}\big)^{-2}.
\end{align*}
Since $\BS{f}_{\BS{\Tilde{\beta}}}(\BS{0})=\BS{0}$ we have $\psi(\BS{0},\xi^*)=\Tilde{u}_{0}(\BS{c}^\trp\BS{c}+1)\big(\omega \Tilde{u}_{0}\big)^{-2}$ and according to Remark \ref{rem2.3.1.} $\xi^*$ is locally A-optimal if $\Tilde{u}_{0}(\BS{c}^\trp\BS{c}+1)\big(\omega \Tilde{u}_{0}\big)^{-2}={\rm tr}(\BS{M}^{-1}(\xi^*,\BS{\Tilde{\beta}}))$ which holds true  if  ${\omega=\sqrt{  \frac{(\BS{c}^\trp\BS{c}+1)}{\Tilde{u}_{0}{\rm tr}(\BS{M}^{-1}(\xi^*,\BS{\Tilde{\beta}})}}}$. By (\ref{eq3.20}) we get $\omega=\frac{\sqrt{\BS{c}^\trp\BS{c}+1}}{\sqrt{\BS{c}^\trp\BS{c}+1}+\sqrt{\Tilde{u}_{0}\,\widetilde{\tau}}}.$
\end{proof}

\begin{theorem} \label{theo3.3.2.} Consider the assumptions and notations of Theorem \ref{theo3.3.1.} with ${\widetilde{\tau}={\rm tr}\bigl(\BS{\Tilde{M}}^{-1}(\xi_{-\BS{0}}^*,\BS{\Tilde{\beta}})\bigr)}$. Let
\[
\xi^*=\Bigg(\frac{\sqrt{\BS{c}^\trp\BS{c}+1}}{\sqrt{\BS{c}^\trp\BS{c}+1}+\sqrt{\Tilde{u}_{0}\widetilde{\tau}}}\Bigg)\,\xi_{\BS{0}}+\Bigg(\frac{\sqrt{\Tilde{u}_{0}\,\widetilde{\tau}}}{\sqrt{\BS{c}^\trp\BS{c}+1}+\sqrt{\Tilde{u}_{0}\,\widetilde{\tau}}}\Bigg)\,\xi^*_{-\BS{0}}.
\]
Denote the following equations 
\begin{align*}
T_{1}(\BS{x},\BS{\Tilde{\beta}})&=\frac{(\sqrt{\BS{c}^\trp\BS{c}+1}+\sqrt{\Tilde{u}_{0}\widetilde{\tau}})^2(\BS{c}^\trp\BS{f}_{\BS{\Tilde{\beta}}}(\BS{x})-\Tilde{u}^{\frac{1}{2}}(\BS{x},\BS{\Tilde{\beta}}))^2}{\Tilde{u}_{0}^2}\\
&+\frac{2(\sqrt{\BS{c}^\trp\BS{c}+1}+\sqrt{\Tilde{u}_{0}\widetilde{\tau}})^2}{\Tilde{u}_{0}\,\sqrt{\widetilde{\tau}\Tilde{u}_{0}(\BS{c}^\trp\BS{c}+1)}}\Bigg(\BS{f}^{\trp}_{\BS{\Tilde{\beta}}}(\BS{x}) \BS{\Tilde{M}}^{-1}(\xi^*_{-\BS{0}},\BS{\Tilde{\beta}})\BS{c}\BS{c}^\trp\BS{f}_{\BS{\Tilde{\beta}}}(\BS{x})\\
&-2\BS{c}^\trp \BS{\Tilde{M}}^{-1}(\xi^*_{-\BS{0}},\BS{\Tilde{\beta}})\Tilde{u}^{\frac{1}{2}}(\BS{x},\BS{\Tilde{\beta}})\BS{f}_{\BS{\Tilde{\beta}}}(\BS{x})\Bigg),\\
T_{2}(\BS{x},\BS{\Tilde{\beta}})&=2\sqrt{\frac{\widetilde{\tau} }{ \Tilde{u}_{0} (\BS{c}^\trp\BS{c}+1) } }\Bigg( \BS{f}^{\trp}_{\BS{\Tilde{\beta}}}(\BS{x}) \BS{\Tilde{M}}^{-1}(\xi^*_{-\BS{0}},\BS{\Tilde{\beta}})\BS{c}\BS{c}^\trp\BS{f}_{\BS{\Tilde{\beta}}}(\BS{x}) \\
&-\BS{c}^\trp \BS{\Tilde{M}}^{-1}(\xi^*_{-\BS{0}},\BS{\Tilde{\beta}})\Tilde{u}^{\frac{1}{2}}(\BS{x},\BS{\Tilde{\beta}})\BS{f}_{\BS{\Tilde{\beta}}}(\BS{x}) \Bigg).
\end{align*}
 Then \\
(1) If $\xi^*$ is locally A-optimal  (at $\BS{\beta}$) for model $\mathcal{M}$ and  $T_1(\BS{x},\BS{\Tilde{\beta}})\ge 0$ for all $\BS{x}\in \mathcal{\widetilde{X}}$ then $\xi^*_{-\BS{0}}$ is locally A-optimal  (at $\BS{\Tilde{\beta}}$) for model $\widetilde{\mathcal{M}}$.\\
(2) If  $\xi^*_{-\BS{0}}$ is  locally A-optimal (at $\BS{\Tilde{\beta}}$) for model $\widetilde{\mathcal{M}}$ and  
\begin{align}
\BS{\Tilde{f}}^{\trp}_{\BS{\Tilde{\beta}}}(\BS{x}) \BS{\Tilde{M}}^{-2}(\xi^*_{-\BS{0}},\BS{\Tilde{\beta}})\BS{\Tilde{f}}_{\BS{\Tilde{\beta}}}(\BS{x}) \le  \widetilde{\tau}\,\Big(1-\frac{(\BS{c}^\trp\BS{f}^{\trp}_{\BS{\Tilde{\beta}}}(\BS{x})-\Tilde{u}^{\frac{1}{2}}(\BS{x},\BS{\Tilde{\beta}}))^2}{\Tilde{u}_{0}}\Big)+T_{2}(\BS{x},\BS{\Tilde{\beta}}) \,\,\forall \BS{x}\in \mathcal{X}\label{eq3.21} 
\end{align}
then $\xi^*$ is locally A-optimal  (at $\BS{\beta}$) for model $\mathcal{M}$.
  \end{theorem}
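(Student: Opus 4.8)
The plan is to imitate the proof of Theorem~\ref{theo3.3.1.} in structure, but with the A-optimality equivalence condition (\ref{Equiv-A}) replacing the D-condition (\ref{Equiv-D}) at every step, and with the three algebraic inputs already assembled in the excerpt: the optimal weight $\omega$ from Lemma~\ref{lemm3.3-2}, the block form (\ref{eq3-inv}) of $\BS{M}^{-2}(\xi^*,\BS{\Tilde{\beta}})$, and the trace formula (\ref{eq3.20}). Because the hypotheses force $u(\BS{x},\BS{\beta})=\Tilde{u}(\BS{x},\BS{\Tilde{\beta}})$ and $\BS{M}^{-1}(\xi^*,\BS{\beta})=\BS{M}^{-1}(\xi^*,\BS{\Tilde{\beta}})$ on $\widetilde{\mathcal{X}}$, A-optimality of $\xi^*$ for $\mathcal{M}$ is certified by $\psi(\BS{x},\xi^*)\le\mathrm{tr}\bigl(\BS{M}^{-1}(\xi^*,\BS{\Tilde{\beta}})\bigr)$ for all $\BS{x}$, with $\psi$ the A-sensitivity function already expanded in the proof of Lemma~\ref{lemm3.3-2}, while A-optimality of $\xi^*_{-\BS{0}}$ for $\widetilde{\mathcal{M}}$ is certified by $\BS{f}^{\trp}_{\BS{\Tilde{\beta}}}(\BS{x})\BS{\Tilde{M}}^{-2}(\xi^*_{-\BS{0}},\BS{\Tilde{\beta}})\BS{f}_{\BS{\Tilde{\beta}}}(\BS{x})\le\widetilde{\tau}$ for all $\BS{x}$.

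The computational core, shared by both parts, is a single reduction. I would substitute $\omega=\sqrt{\BS{c}^\trp\BS{c}+1}\big/\bigl(\sqrt{\BS{c}^\trp\BS{c}+1}+\sqrt{\Tilde{u}_{0}\widetilde{\tau}}\bigr)$ into (\ref{eq3-inv}) and into (\ref{eq3.20}), contract the block matrix against $\big(1,\BS{f}^\trp(\BS{x})\big)^\trp$, and use $\BS{f}_{\BS{\Tilde{\beta}}}(\BS{0})=\BS{0}$ to rewrite everything through $\BS{f}_{\BS{\Tilde{\beta}}}(\BS{x})$. After dividing out the common positive factor $1/(1-\omega)^2$ and collecting the scalar contribution of the $(1,1)$ entry together with the linear $\BS{c}^\trp$ terms into the perfect square $(\BS{c}^\trp\BS{f}_{\BS{\Tilde{\beta}}}(\BS{x})-\Tilde{u}^{\frac12}(\BS{x},\BS{\Tilde{\beta}}))^2/\Tilde{u}_{0}$, the inequality $\psi(\BS{x},\xi^*)\le\mathrm{tr}\bigl(\BS{M}^{-1}(\xi^*,\BS{\Tilde{\beta}})\bigr)$ becomes a scalar relation among $\BS{f}^{\trp}_{\BS{\Tilde{\beta}}}(\BS{x})\BS{\Tilde{M}}^{-2}(\xi^*_{-\BS{0}},\BS{\Tilde{\beta}})\BS{f}_{\BS{\Tilde{\beta}}}(\BS{x})$, the bound $\widetilde{\tau}$, that square, and the genuinely sign-indefinite cross terms $\BS{f}^{\trp}_{\BS{\Tilde{\beta}}}(\BS{x})\BS{\Tilde{M}}^{-1}(\xi^*_{-\BS{0}},\BS{\Tilde{\beta}})\BS{c}\BS{c}^\trp\BS{f}_{\BS{\Tilde{\beta}}}(\BS{x})$ and $\BS{c}^\trp\BS{\Tilde{M}}^{-1}(\xi^*_{-\BS{0}},\BS{\Tilde{\beta}})\Tilde{u}^{\frac12}(\BS{x},\BS{\Tilde{\beta}})\BS{f}_{\BS{\Tilde{\beta}}}(\BS{x})$ carried by the off-diagonal blocks of (\ref{eq3-inv}). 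These cross terms, absent in the D-case, are exactly what the quantities $T_{1}$ and $T_{2}$ package, up to the prefactors generated by $\omega$.

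For part (1) I would read the reduced inequality in the grouping that isolates $T_{1}$, namely $\BS{f}^{\trp}_{\BS{\Tilde{\beta}}}(\BS{x})\BS{\Tilde{M}}^{-2}(\xi^*_{-\BS{0}},\BS{\Tilde{\beta}})\BS{f}_{\BS{\Tilde{\beta}}}(\BS{x})+T_{1}(\BS{x},\BS{\Tilde{\beta}})\le\widetilde{\tau}$. Since $\xi^*$ is assumed A-optimal for $\mathcal{M}$ this holds for all $\BS{x}\in\widetilde{\mathcal{X}}$, and the extra hypothesis $T_{1}(\BS{x},\BS{\Tilde{\beta}})\ge0$ lets the $T_{1}$ term be dropped, leaving condition (\ref{Equiv-A}) for $\xi^*_{-\BS{0}}$ under $\widetilde{\mathcal{M}}$; hence $\xi^*_{-\BS{0}}$ is A-optimal. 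For part (2) I would instead move the square term to the right, which turns the reduced inequality into precisely (\ref{eq3.21}) with the cross terms regrouped as $T_{2}$; assuming (\ref{eq3.21}) therefore reinstates $\psi(\BS{x},\xi^*)\le\mathrm{tr}\bigl(\BS{M}^{-1}(\xi^*,\BS{\Tilde{\beta}})\bigr)$ for all $\BS{x}\in\mathcal{X}$, so $\xi^*$ is A-optimal for $\mathcal{M}$. A-optimality of $\xi^*_{-\BS{0}}$ enters here to fix $\widetilde{\tau}$ as the trace of the correct design and, through Remark~\ref{rem2.3.1.}, to force equality in (\ref{eq3.21}) at the support points of $\xi^*_{-\BS{0}}$, hence — together with the choice of $\omega$ — at the origin, so that the sensitivity function meets its bound on $\mathrm{supp}(\xi^*)$.

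The step I expect to be the main obstacle is precisely this bookkeeping. In Theorem~\ref{theo3.3.1.} the entire surplus collapsed into the single nonnegative square $\nu(\BS{c}^\trp\BS{f}_{\BS{\Tilde{\beta}}}(\BS{x})-\Tilde{u}^{\frac12}(\BS{x},\BS{\Tilde{\beta}}))^2/\Tilde{u}_{0}$, which could be discarded unconditionally; the second power $\BS{M}^{-2}$ here spawns the additional $\BS{\Tilde{M}}^{-1}(\xi^*_{-\BS{0}},\BS{\Tilde{\beta}})\BS{c}$ cross terms, which have no definite sign and do not vanish, and this is exactly why part (1) must assume $T_{1}\ge0$ and part (2) must assume the full inequality (\ref{eq3.21}). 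Matching the coefficients produced by substituting the A-optimal $\omega$ and by (\ref{eq3.20}) — in particular reconciling the factor $(\sqrt{\BS{c}^\trp\BS{c}+1}+\sqrt{\Tilde{u}_{0}\widetilde{\tau}})^2/\Tilde{u}_{0}^2$ appearing in $T_{1}$ with the normalization $\sqrt{\widetilde{\tau}/(\Tilde{u}_{0}(\BS{c}^\trp\BS{c}+1))}$ appearing in $T_{2}$ — against the stated expressions is the delicate part that must be verified term by term.
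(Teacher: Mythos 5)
Your proposal follows essentially the same route as the paper's own proof: both reduce the A-optimality condition (\ref{Equiv-A}) for $\xi^*$ via the block form (\ref{eq3-inv}) and the trace identity (\ref{eq3.20}), with the weight $\omega=\sqrt{\BS{c}^\trp\BS{c}+1}/(\sqrt{\BS{c}^\trp\BS{c}+1}+\sqrt{\Tilde{u}_{0}\widetilde{\tau}})$ from Lemma \ref{lemm3.3-2}, to the scalar inequality $\BS{f}^{\trp}_{\BS{\Tilde{\beta}}}(\BS{x})\BS{\Tilde{M}}^{-2}(\xi^*_{-\BS{0}},\BS{\Tilde{\beta}})\BS{f}_{\BS{\Tilde{\beta}}}(\BS{x})+T_{1}(\BS{x},\BS{\Tilde{\beta}})\le\widetilde{\tau}$, then obtain part (1) by discarding $T_{1}\ge 0$ and part (2) by regrouping into (\ref{eq3.21}), exactly as the paper does. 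The only difference is one of completeness rather than method: you defer the term-by-term coefficient matching that identifies the cross terms with $T_{1}$ and $T_{2}$, which the paper likewise dispatches as ``straightforward.''
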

 
\begin{proof}
\underbar{Ad ($1$)}\,\,Let $\xi^*=(\frac{\sqrt{\BS{c}^\trp\BS{c}+1}}{\sqrt{\BS{c}^\trp\BS{c}+1}+\sqrt{\Tilde{u}_{0}\,\widetilde{\tau}}})\,\xi_{\BS{0}}+(\frac{\sqrt{\Tilde{u}_{0}\,\widetilde{\tau}}}{\sqrt{\BS{c}^\trp\BS{c}+1}+\sqrt{\Tilde{u}_{0}\,\widetilde{\tau}}})\,\xi^*_{-\BS{0}} \in \Xi_0$ on $\mathcal{X}$ be  locally A-optimal  (at $\BS{\beta}$) for model $\mathcal{M}$.  We want to proof that  $\xi^*_{-\BS{0}}$ on $\mathcal{\widetilde{X}}$ is locally A-optimal (at $\BS{\Tilde{\beta}}$) for model $\widetilde{\mathcal{M}}$. Then condition (\ref{Equiv-A}) of  The Equivalence Theorem  guarantees at $\BS{\beta}=(\beta_0,\BS{\Tilde{\beta}}^\trp)^\trp$ that for all $\BS{x} \in \mathcal{X}$
 \begin{equation}
u(\BS{x},\BS{\beta})\big(1,\BS{f}^\trp(\BS{x})\big)\BS{M}^{-2}(\xi^*,\BS{\beta})\big(1,\BS{f}^\trp(\BS{x})\big)^\trp\leq {\rm tr}\bigl(\BS{\Tilde{M}}^{-1}(\xi ^*,\BS{\Tilde{\beta}})\bigr), \label{eq3.22}
\end{equation}
 where, at $\BS{\beta}=(\beta_0,\BS{\Tilde{\beta}}^\trp)^\trp$, ${u}(\BS{x},\BS{\beta})= \Tilde{u}(\BS{x},\BS{\Tilde{\beta}})$ and $\BS{f}_{\BS{\beta}}(\BS{x})=\BS{f}_{\BS{\Tilde{\beta}}}(\BS{x})$ for all $\BS{x}\in \widetilde{\mathcal{X}}$ with $\widetilde{\mathcal{X}}\subseteq \mathcal{X}$. So $\BS{M}^{-2}\big(\xi^*,\BS{\beta}\big)=\BS{M}^{-2}\big(\xi^*,\BS{\Tilde{\beta}}\big)$ which is given by (\ref{eq3-inv}) with  $\omega=(\sqrt{\BS{c}^\trp\BS{c}+1})/(\sqrt{\BS{c}^\trp\BS{c}+1}+\sqrt{\Tilde{u}_{0}\,\widetilde{\tau}})$. Then the l.h.s. of inequality (\ref{eq3.22}) equals 
\begin{align*}
&\BS{f}^{\trp}_{\BS{\Tilde{\beta}}}(\BS{x}) \Bigg(\frac{(\BS{c}^\trp\BS{c}+1)\BS{c}\BS{c}^\trp}{\omega^2\Tilde{u}_{0}^2}+\frac{2\BS{\Tilde{M}}^{-1}(\xi^*_{-\BS{0}},\BS{\Tilde{\beta}})\BS{c}\BS{c}^\trp}{\omega(1-\omega)\Tilde{u}_{0}}+\frac{1}{(1-\omega)^2}\BS{\Tilde{M}}^{-2}(\xi^*_{-\BS{0}},\BS{\Tilde{\beta}})\Bigg)\BS{f}_{\BS{\Tilde{\beta}}}(\BS{x}) \\
&-2\,\Bigg(\frac{\BS{c}^\trp(\BS{c}^\trp\BS{c}+1)}{\omega^2\,\Tilde{u}_{0}^2}+\frac{\BS{c}^\trp \BS{\Tilde{M}}^{-1}(\xi^*_{-\BS{0}},\BS{\Tilde{\beta}})}{\omega(1-\omega)\Tilde{u}_{0}}\Bigg)\Tilde{u}^{\frac{1}{2}}(\BS{x},\BS{\Tilde{\beta}})\BS{f}_{\BS{\Tilde{\beta}}}(\BS{x})+\frac{(\BS{c}^\trp\BS{c}+1)u(\BS{x},\BS{\Tilde{\beta}})}{\omega^2\,\Tilde{u}_{0}^2},
\end{align*}
and together with (\ref{eq3.20}) it is straightforward to see that  (\ref{eq3.22}) is equivalent to
\begin{equation}
\BS{f}^{\trp}_{\BS{\Tilde{\beta}}}(\BS{x}) \BS{\Tilde{M}}^{-2}(\xi^*_{-\BS{0}},\BS{\Tilde{\beta}})\BS{f}_{\BS{\Tilde{\beta}}}(\BS{x})+T_1(\BS{x},\BS{\Tilde{\beta}})\le \widetilde{\tau} \,\,\,\,\forall \BS{x}\in \widetilde{\mathcal{X}}.  \label{eq3.100}
\end{equation}
Since $T_1(\BS{x},\BS{\Tilde{\beta}})\ge 0$   for all   $\BS{x}\in \mathcal{\widetilde{X}}$,  (\ref{eq3.100}) is equivalent to 
\begin{equation*}
 \BS{f}^{\trp}_{\BS{\Tilde{\beta}}}(\BS{x}) \BS{\Tilde{M}}^{-2}(\xi^*_{-\BS{0}},\BS{\Tilde{\beta}})\BS{f}_{\BS{\Tilde{\beta}}}(\BS{x}) \le  \widetilde{\tau}  \,\,\,\,\forall \BS{x}\in \widetilde{\mathcal{X}}.
\end{equation*}
and so $\xi^*_{-\BS{0}}$ is locally A-optimal (at $\BS{\Tilde{\beta}}$) by condition (\ref{Equiv-A}) of The Equivalence Theorem.\\
\underbar{Ad ($2$)} Let $\xi^*_{-\BS{0}}$ on $\widetilde{\mathcal{X}}$ is locally A-optimal (at $\BS{\Tilde{\beta}}$) for model $\widetilde{\mathcal{M}}$. Under the assumptions stated in the theorem to show that $\xi^*$ from $\Xi_0$ on $\mathcal{X}$ is locally A-optimal  (at $\BS{\beta}$) for model $\mathcal{M}$  we investigate condition (\ref{Equiv-A}) of The Equivalence Theorem  which is given above by  
(\ref{eq3.22}) and  is also equivalent to (\ref{eq3.100}) at $\BS{\beta}$ for all $\BS{x}\in \mathcal{X}$. Hence, it is straightforward to see that  (\ref{eq3.100}) for all $\BS{x}\in \mathcal{X}$ holds true by condition (\ref{eq3.21}). Of course, because $\xi^*_{-\BS{0}}$ is locally A-optimal and $T_{2}(\BS{x},\BS{\Tilde{\beta}})=0$ for all $\BS{x}\in \mathrm{supp}(\xi^*_{-\BS{0}})$   inequality (\ref{eq3.21}) becomes an equality at each design point of $\xi^*_{-\BS{0}}$ which surely is a design point of $\xi^*$. Since  $\omega=(\sqrt{\BS{c}^\trp\BS{c}+1})/(\sqrt{\BS{c}^\trp\BS{c}+1}+\sqrt{\Tilde{u}_{0}\,\widetilde{\tau}})$ and $T_{2}(\BS{0},\BS{\Tilde{\beta}})=0$  the equality also holds at the origin point $\BS{0}$.
\end{proof}
%%%%%%%%%%%%%%%%%%%%%%%%%%%%%
\begin{remark} The results of this section  might be viewed as a generalization of the results of  both \citet{Li2005} and \citet{ZHANG2013196} that were derived under linear models, i.e.,  when the intensities are constants equal to 1. 
\end{remark}
 
\begin{remark}
A design with minimal support, i.e., the support size  equals the dimension of $\BS{f}$ ($r = p$) is called a saturated design. In fact, the assumption $\BS{c}^\trp\BS{f}(\BS{x})=1$ for all $\BS{x}\in \mathrm{supp}(\xi^*)\setminus\{\BS{0}\}$  is equivalent to that  $\BS{f}(\BS{x})$ for all $\BS{x}\in \mathrm{supp}( \xi^*_{-\BS{0}})$ lies  on a hyperplane. Thus every saturated design for  generalized linear models without intercept satisfies that assumption.  Moreover, the assumption $\BS{c}^\trp\BS{f}(\BS{x})=1$ for all $\BS{x}\in\widetilde{\mathcal{X}}$ is satisfied when $\widetilde{\mathcal{X}}$ is given by the $(\nu-1)$-dimensional unit simplex, i.e., $\widetilde{\mathcal{X}}=\{\BS{x}=(x_1,\dots,x_\nu)^\trp, 0\le x_i\le1\,\, \forall i, \sum_{i=1}^\nu x_i=1\}$. In such a case the mixture constraint of $\widetilde{\mathcal{X}}$ which is given by $\sum_{i=1}^{\nu}x_i=1$  entails that $\BS{c}=(1,\dots,1)^\trp$. 
\end{remark}

\section{Applications}

\subsection{ Poisson models}

We consider a first order Poisson  model with $\BS{f}(\BS{x})=(1,\BS{x}^\trp)^\trp$. The intensity functions under $\mathcal{M}$ and $\widetilde{\mathcal{M}}$ are given by 
\[
u(\BS{x},\BS{\beta})=\exp(\beta_0+\BS{x}^\trp\BS{\Tilde{\beta}})\,\, \mbox{ and }\,\, \Tilde{u}(\BS{x},\BS{\Tilde{\beta}})=\exp(\BS{x}^\trp\BS{\Tilde{\beta}}), 
\]
 respectively. It is noted that $u(\BS{x},\BS{\beta})$ factorizes; i.e., $u(\BS{x},\BS{\beta}) =\exp(\beta_0)\Tilde{u}(\BS{x},\BS{\Tilde{\beta}})$. Therefore, $\BS{M}(\xi,\BS{\beta})=\exp(\beta_0)\BS{M}(\xi,\BS{\Tilde{\beta}})$ for any given parameter point $\BS{\beta}=(\beta_0,\BS{\Tilde{\beta}}^\trp)^\trp$.  That means the design $\xi$ is independent of $\beta_0$ and hence, locally optimal designs for a Poisson model with intercept is governed by $\Tilde{u}(\BS{x},\BS{\Tilde{\beta}})$.  Similar situation holds under  the Rasch Poisson-Gamma counts model (\citet{10.1007/978-3-319-00218-7_14}) in item response theory and the  Rasch Poisson counts model (\citet{2018arXiv181003893G}).  \par

A relevant work from the literature  includes the results of \citet{10.2307/24308852} who  derived a  locally D-optimal saturated design $\xi^*$ for a first order Poisson model with intercept on $\mathcal{X}= [0,1]^\nu$ where \,$\nu\ge 2$ at $\beta_i=-2 \,\,(1\le i\le \nu)$. The support is given by $\BS{x}_0^*=(0,0,\dots,0)^\trp$ and  the $\nu$-dimensional unit vectors $\BS{x}_i^*=\BS{e}_i \,\,(1\le i \le \nu)$ with equal weights $(\nu+1)^{-1}$. So under the assumptions of Theorem  \ref{theo3.3.1.}, part (1) with $\BS{c}=\BS{1}_{\nu}$ as the $\nu$-vector of ones, the design $\xi^*_{-\BS{0}}$ on $\mathcal{X}$ is locally D-optimal at $\beta_i=-2 \,\,(1\le i\le \nu)$ for the corresponding model without intercept.  

\subsection{ Logistic models}
Consider a first order logistic model with  $\BS{f}(\BS{x})=(1,\BS{x}^\trp)^\trp$.  The intensity functions under $\mathcal{M}$ and $\widetilde{\mathcal{M}}$ are given by 
\[
u(\BS{x},\BS{\beta})=\frac{\exp(\beta_0+\BS{x}^\trp\BS{\Tilde{\beta}})}{(1+\exp(\beta_0+\BS{x}^\trp\BS{\Tilde{\beta}}))^2}\,\,\, \mbox{    and   }\,\,\,   \Tilde{u}(\BS{x},\BS{\Tilde{\beta}})=\frac{\exp(\BS{x}^\trp\BS{\Tilde{\beta}})}{(1+\exp(\BS{x}^\trp\BS{\Tilde{\beta}}))^2}, 
\]
 respectively. Note that $u(\BS{x},\BS{\beta})=\Tilde{u}(\BS{x},\BS{\Tilde{\beta}})$ and $\BS{M}(\xi,\BS{\beta})=\BS{M}(\xi,\BS{\Tilde{\beta}})$ at  $\BS{\beta}=(0,\BS{\Tilde{\beta}}^\trp)^\trp$. \par

In the literature \citet{doi:10.1080/02331888.2014.937342}, Theorem 3.2, provided a three-point  locally D-optimal  saturated design $\xi^*$  at $(0,\BS{\Tilde{\beta}}^\trp)^\trp$, $\BS{\Tilde{\beta}}\in (0,\infty)^2 $  for the two-factor logistics model on the experimental region $\mathcal{X}=  [0,\infty)^2$.   The support is given by $(0,0)^\trp, (0,u^*)^\trp,(u^*,0)^\trp$ where $u^*> 0$ is the unique solution for $u$ to the equation $2+u+2e^u-ue^u=0$. Hence, the assumptions of Theorem  \ref{theo3.3.1.}, part (1) with $\BS{c}=(1/u^*, 1/u^*)^\trp$ are satisfied  and hence the design $\xi^*_{-\BS{0}}$ on $\mathcal{X}$ is locally D-optimal (at $\BS{\Tilde{\beta}}$) with equal weights $1/2$ for the corresponding model without intercept.\par
See also Example 3 in \citet{schmidt2017optimal} where product type designs are locally D-optimal at $\BS{\beta}=(0,\BS{\Tilde{\beta}}^\trp)^\trp$ for logistic models with intercept.

\section{Extensions} \label{sec5}

The obtained results in Section 3 under generalized linear models  might be applicable under another nonlinear models that are defined by
 \begin{equation}
Y(\BS{x})=h(\BS{x},\BS{\beta})+\varepsilon\,\,\mbox{ where }\,\,  \varepsilon \,\,\mbox{is the error term}. \label{eq2.6}
  \end{equation} 
In this context we define $\BS{f}_{\BS{\beta}}(\BS{x})$ to be the gradient vector of $h(\BS{x},\BS{\beta})$, i.e.,
\begin{equation}
\BS{f}_{\BS{\beta}}(\BS{x})=\nabla  h(\BS{x},\BS{\beta})=\frac{\partial h(\BS{x},\BS{\beta}) }{\partial \BS{\beta}}=\Big(\frac{\partial h(\BS{x},\BS{\beta}) }{\partial \beta_1},\dots,\frac{\partial h(\BS{x},\BS{\beta}) }{\partial \beta_p}\Big)^\trp.  \label{eq2.7}
\end{equation}
The Fisher information matrix at a point $\BS{x}\in \mathcal{X}$ is given by $\BS{M}(\BS{x},\BS{\beta})=\BS{f}_{\BS{\beta}}(\BS{x})\BS{f}_{\BS{\beta}}^\trp(\BS{x})$. Actually, nonlinear models of form (\ref{eq2.6}) were discussed carefully in the literature (see  \citet{doi:10.1080/00401706.1989.10488475}, \citet{ATKINSON1996437}). Here, generally, a nonlinear model includes explicitly an intercept term if the  function $\BS{f}_{\BS{\beta}}(\BS{x})$ includes the constant $1$ (see \citet {10.1007/978-3-662-12516-8_8},  \citet{LI2011644}, \citet{doi:10.1080/02331888.2014.922562}, \citet{he2018optimal}).  In \citet{doi:10.1198/016214508000000427} some dose–response nonlinear models with intercept were listed, e.g., 
\begin{align*}
&E_{\rm{max}} \mbox{ model }: h(\BS{x},\BS{\beta})=\beta_0+\frac{\beta_1 x}{x+\beta_2},\,\,\mbox{with}\,\,\BS{f}_{\BS{\beta}}(\BS{x})=\Big(1,\frac{x}{x+\beta_1},\frac{-\beta_1 x}{(x+\beta_1)^2}\Big)^\trp \\
&\mbox{Exponential model}:  h(\BS{x},\BS{\beta})=\beta_0+\beta_1\exp(\frac{x}{\beta_2}),\,\,\mbox{with}\,\,\BS{f}_{\BS{\beta}}(\BS{x})=\Big(1,\exp(\frac{x}{\beta_2}), -\frac{\beta_1 x \exp(\frac{x}{\beta_2})}{\beta_2^2}\Big)^\trp.
\end{align*}
The above nonlinear models were also considered in \citet{10.2307/25734102} and locally D-optimal designs on the experiential region $[0,150]$ were derived under zero intercept, i.e., $\beta_0=0$. The support is given by $\{0, x^*, 150\}$ with equal weights $1/3$ where $x^*\in (0,150)$ is obtained analytically. \par
 In analogy to the results derived under GLMs in Section 3 we denote $\BS{\beta}=(\beta_0,\BS{\Tilde{\beta}}^\trp)^\trp$ and we can write the Fisher information matrix of $\xi$ on $\mathcal{\widetilde{X}}$  under a non-intercept nonlinear model as ${\BS{\Tilde{M}}(\xi,\BS{\Tilde{\beta}})=\int_{\mathcal{\widetilde{X}}} \BS{f}_{\BS{\Tilde{\beta}}}(\BS{x}) \BS{f}_{\BS{\Tilde{\beta}}}^\trp(\BS{x})\, \xi(\mathrm{d}\BS{x})}$, while the Fisher information matrix of $\xi$ on $\mathcal{X}$ under a nonlinear model with intercept is $\BS{M}(\xi,\BS{\beta})=\int_{\mathcal{X}} \big(1,\BS{f}_{\BS{\beta}}^\trp(\BS{x})\big)^\trp \big(1,\BS{f}_{\BS{\beta}}^\trp(\BS{x})\big)\, \xi(\mathrm{d}\BS{x})$. The following results are immediate.

\begin{corollary} \label{cor3.1}
Let the design $\xi^*$ be defined on $\mathcal{X}$ such that $\BS{0}\in \mathrm{supp}(\xi^*)$. Let the design $\xi^*_{-\BS{0}}$ on $\mathcal{\widetilde{X}}$  be the  conditional measure of $\xi^*$ given $\BS{x}\neq \BS{0}$  such that $\mathcal{\widetilde{X}}\subseteq \mathcal{X}$. Given a parameter point $\BS{\beta}=(\beta_0,\BS{\Tilde{\beta}}^\trp)^\trp$  such that $\BS{f}_{\BS{\beta}}(\BS{x})=\BS{f}_{\BS{\Tilde{\beta}}}(\BS{x})$ for all $\BS{x}\in \mathcal{\widetilde{X}}$ with  $\BS{f}_{\BS{\Tilde{\beta}}}(\BS{0})=\BS{0}$. Then assume there exist a constant vector $\BS{c}$ such that $\BS{c}^\trp\BS{f}_{\BS{\Tilde{\beta}}}(\BS{x})=1$ for all $\BS{x}\in \mathrm{supp}(\xi^*)\setminus\{\BS{0}\}$.  Let $\xi^*=(1/(\nu+1))\,\xi_{\BS{0}}+(\nu/(\nu+1))\,\xi^*_{-\BS{0}}$. Then \\
(1) If $\xi^*$ is locally D-optimal  (at $\BS{\beta}$) for model with intercept then $\xi^*_{-\BS{0}}$ is locally D-optimal  (at $\BS{\Tilde{\beta}}$) for  the corresponding model without intercept.\\
(2) If  $\xi^*_{-\BS{0}}$ is  locally D-optimal (at $\BS{\Tilde{\beta}}$) for model without intercept and  
\begin{align}
\BS{f}^{\trp}_{\BS{\Tilde{\beta}}}(\BS{x})\BS{\Tilde{M}}^{-1}(\xi^*_{-\BS{0}},\BS{\Tilde{\beta}})\BS{f}_{\BS{\Tilde{\beta}}}(\BS{x})\le \nu\Big( 1-(\BS{c}^\trp\BS{f}_{\BS{\Tilde{\beta}}}(\BS{x})-1)^2\Big)\,\,\forall \BS{x}\in \mathcal{X} \label{eq4-6}
  \end{align}
  then $\xi^*$ is locally D-optimal  (at $\BS{\beta}$) for the corresponding model with intercept.
\end{corollary}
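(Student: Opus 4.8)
The plan is to recognize Corollary~\ref{cor3.1} as nothing more than the specialization of Theorem~\ref{theo3.3.1.} to the situation in which the intensity function is identically equal to one, so that no genuinely new calculation is required. The key structural observation is that for a nonlinear model of the form (\ref{eq2.6}) the Fisher information at a single point is $\BS{M}(\BS{x},\BS{\beta})=\BS{f}_{\BS{\beta}}(\BS{x})\BS{f}_{\BS{\beta}}^\trp(\BS{x})$ with $\BS{f}_{\BS{\beta}}=\nabla h$ as in (\ref{eq2.7}); this is precisely the outer-product form that drives the entire Section~3 machinery once the intensity has already been absorbed into $\BS{f}_{\BS{\beta}}$. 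Accordingly the General Equivalence Theorem condition (\ref{Equiv-D}) holds verbatim with the regression function $\BS{f}$ replaced by $\BS{f}_{\BS{\beta}}$ and with the intensity set to $u\equiv 1$.

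Next I would make the translation explicit. For the model with intercept the gradient is the augmented vector $\big(1,\BS{f}_{\BS{\beta}}^\trp(\BS{x})\big)^\trp$, whose leading entry is the constant $1=\partial h/\partial\beta_0$. Hence the top-left entry of $\BS{M}(\xi,\BS{\beta})$ equals $\int_{\mathcal{X}}1\,\xi(\mathrm{d}\BS{x})=1$, which is exactly the quantity $m_{1,1}$ of Section~3 evaluated at unit intensity; in particular $\Tilde{u}_{0}=1$. The hypotheses of the corollary, namely $\BS{f}_{\BS{\beta}}(\BS{x})=\BS{f}_{\BS{\Tilde{\beta}}}(\BS{x})$ on $\mathcal{\widetilde{X}}$, $\BS{f}_{\BS{\Tilde{\beta}}}(\BS{0})=\BS{0}$, and $\BS{c}^\trp\BS{f}_{\BS{\Tilde{\beta}}}(\BS{x})=1$ on the remaining support, are then the $u\equiv 1$ instances of the hypotheses of Theorem~\ref{theo3.3.1.}. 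Consequently the block inverse (\ref{eq3.15}) and Lemma~\ref{lemm3.3-1} apply (with $\Tilde{u}_{0}=1$) and deliver the D-optimal origin weight $\omega=(\nu+1)^{-1}$, which is the weight fixed in the statement.

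With these identifications the proof of Theorem~\ref{theo3.3.1.} transfers line by line. Substituting $\Tilde{u}^{\frac{1}{2}}(\BS{x},\BS{\Tilde{\beta}})=1$ and $\Tilde{u}_{0}=1$ into inequality (\ref{eq3.16}) turns it into exactly (\ref{eq4-6}); part~(1) then follows from the forward implication and part~(2) from the reverse implication of Theorem~\ref{theo3.3.1.}, the equality on the support points of $\xi^*_{-\BS{0}}$ and at the origin being inherited in the same way.

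I do not expect a real obstacle here: the single point that needs care is confirming that the constant leading entry $1$ of the gradient genuinely plays the role of the intensity-scaled constant term, i.e.\ that the substitution $u\equiv 1$ is a faithful translation of the generalized-linear-model setup. Once the block structure of $\BS{M}(\xi,\BS{\beta})$ is verified to coincide with the unit-intensity information matrix of Section~3, the statement is, as claimed, immediate.
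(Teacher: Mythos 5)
Your proposal is correct and coincides with the paper's own treatment: the paper offers no separate proof, stating that the corollary is ``immediate'' from the Section~3 results once the gradient $\BS{f}_{\BS{\beta}}=\nabla h$ plays the role of the intensity-scaled regression function with $u\equiv 1$, hence $\Tilde{u}_{0}=1$, which is exactly the reduction you carry out (and your substitution correctly turns (\ref{eq3.16}) into (\ref{eq4-6})).
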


\begin{corollary} \label{cor3.2} Under assumptions and notations of Corollary \ref {cor3.1} with ${\widetilde{\tau}={\rm tr}\bigl(\BS{\Tilde{M}}^{-1}(\xi_{-\BS{0}}^*,\BS{\Tilde{\beta}})\bigr)}$.  
 Let 
\[
\xi^*=\Bigg(\frac{\sqrt{\BS{c}^\trp\BS{c}+1}}{\sqrt{\BS{c}^\trp\BS{c}+1}+\sqrt{\widetilde{\tau}}}\Bigg)\,\xi_{\BS{0}}+\Bigg(\frac{\sqrt{\widetilde{\tau}}}{\sqrt{\BS{c}^\trp\BS{c}+1}+\sqrt{\widetilde{\tau}}}\Bigg)\,\xi^*_{-\BS{0}}.\]
Denote the following equations 
\begin{align*}
T_{1}(\BS{x},\BS{\Tilde{\beta}})&=(\sqrt{\BS{c}^\trp\BS{c}+1}+\sqrt{\widetilde{\tau}})^2(\BS{c}^\trp\BS{f}_{\BS{\Tilde{\beta}}}(\BS{x})-1)^2\\
&+\frac{2(\sqrt{\BS{c}^\trp\BS{c}+1}+\sqrt{\widetilde{\tau}})^2}{\sqrt{\widetilde{\tau}(\BS{c}^\trp\BS{c}+1)}}\Bigg(\BS{f}^{\trp}_{\BS{\Tilde{\beta}}}(\BS{x}) \BS{\Tilde{M}}^{-1}(\xi^*_{-\BS{0}},\BS{\Tilde{\beta}})\BS{c}\BS{c}^\trp\BS{f}_{\BS{\Tilde{\beta}}}(\BS{x})\\
&-2\BS{c}^\trp \BS{\Tilde{M}}^{-1}(\xi^*_{-\BS{0}},\BS{\Tilde{\beta}})\BS{f}_{\BS{\Tilde{\beta}}}(\BS{x})\Bigg),\\
T_{2}(\BS{x},\BS{\Tilde{\beta}})&=2\sqrt{\frac{\widetilde{\tau} }{ \BS{c}^\trp\BS{c}+1 } }\Bigg( \BS{f}^{\trp}_{\BS{\Tilde{\beta}}}(\BS{x}) \BS{\Tilde{M}}^{-1}(\xi^*_{-\BS{0}},\BS{\Tilde{\beta}})\BS{c}\BS{c}^\trp\BS{f}_{\BS{\Tilde{\beta}}}(\BS{x}) \\
&-\BS{c}^\trp \BS{\Tilde{M}}^{-1}(\xi^*_{-\BS{0}},\BS{\Tilde{\beta}})\BS{f}_{\BS{\Tilde{\beta}}}(\BS{x}) \Bigg).
\end{align*}
 Then \\
(1) If $\xi^*$ is locally A-optimal  (at $\BS{\beta}$) for a model with intercept and  $T_1(\BS{x},\BS{\Tilde{\beta}})\ge 0$ for all $\BS{x}\in \mathcal{\widetilde{X}}$ then $\xi^*_{-\BS{0}}$ is locally A-optimal  (at $\BS{\Tilde{\beta}}$) for the corresponding model without intercept.\\
(2) If  $\xi^*_{-\BS{0}}$ is  locally A-optimal (at $\BS{\Tilde{\beta}}$) for a model without intercept and  
\begin{align*}
\BS{f}^{\trp}_{\BS{\Tilde{\beta}}}(\BS{x}) \BS{\Tilde{M}}^{-2}(\xi^*_{-\BS{0}},\BS{\Tilde{\beta}})\BS{f}_{\BS{\Tilde{\beta}}}(\BS{x}) \le  \widetilde{\tau}\,\Big(1-(\BS{c}^\trp\BS{f}^{\trp}_{\BS{\Tilde{\beta}}}(\BS{x})-1)^2\Big)+T_{2}(\BS{x},\BS{\Tilde{\beta}}) \,\,\forall \BS{x}\in \mathcal{X}
\end{align*}
then $\xi^*$ is locally A-optimal  (at $\BS{\beta}$) for the corresponding model with intercept.
  \end{corollary}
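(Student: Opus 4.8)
The plan is to deduce Corollary \ref{cor3.2} directly from Theorem \ref{theo3.3.2.} by observing that the homoscedastic nonlinear model (\ref{eq2.6}) fits the very same information-matrix template used throughout Section 3. By (\ref{eq2.7}) the one-point Fisher information is $\BS{M}(\BS{x},\BS{\beta})=\BS{f}_{\BS{\beta}}(\BS{x})\BS{f}_{\BS{\beta}}^\trp(\BS{x})$, exactly the factored form exploited there. For the model with intercept the vector entering the information matrix is the augmented gradient $(1,\BS{f}_{\BS{\beta}}^\trp(\BS{x}))^\trp$; comparing with the GLM augmented vector $(\Tilde{u}^{\frac{1}{2}}(\BS{x},\BS{\Tilde{\beta}}),\BS{f}_{\BS{\Tilde{\beta}}}^\trp(\BS{x}))^\trp$ of Section 3, the two coincide once the intercept-slot intensity is set to one, i.e. $\Tilde{u}(\BS{x},\BS{\Tilde{\beta}})\equiv 1$ (so $\Tilde{u}_{0}=1$ and $\Tilde{u}^{\frac{1}{2}}(\BS{x},\BS{\Tilde{\beta}})=1$ for every $\BS{x}$). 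Under this identification the hypothesis $\BS{c}^\trp\BS{f}_{\BS{\Tilde{\beta}}}(\BS{x})=1$ on $\mathrm{supp}(\xi^*)\setminus\{\BS{0}\}$ plays precisely the role of $\BS{c}^\trp\BS{f}(\BS{x})=1$ (equivalently $\BS{c}^\trp\BS{f}_{\BS{\Tilde{\beta}}}(\BS{x})=\Tilde{u}^{\frac{1}{2}}(\BS{x},\BS{\Tilde{\beta}})$) in the GLM computation.

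Granting this dictionary, I would substitute $\Tilde{u}_{0}=1$ and $\Tilde{u}^{\frac{1}{2}}(\BS{x},\BS{\Tilde{\beta}})=1$ into the block inverse (\ref{eq3.15}), the trace identity (\ref{eq3.20}), and the squared inverse (\ref{eq3-inv}); since these were derived purely from the factored structure, they remain valid verbatim. A short check then confirms that the weight $\omega$ together with the functions $T_1,T_2$ displayed in Corollary \ref{cor3.2} are exactly the specializations of their counterparts in Lemma \ref{lemm3.3-2} and Theorem \ref{theo3.3.2.} under this substitution. In particular the A-criterion sensitivity function for the intercept model is $(1,\BS{f}_{\BS{\beta}}^\trp(\BS{x}))\BS{M}^{-2}(\xi^*,\BS{\beta})(1,\BS{f}_{\BS{\beta}}^\trp(\BS{x}))^\trp$, and feeding (\ref{eq3-inv}) into it reproduces, after the same elementary rearrangement as in Theorem \ref{theo3.3.2.}, the nonlinear analogue of (\ref{eq3.100}), namely $\BS{f}_{\BS{\Tilde{\beta}}}^\trp(\BS{x})\BS{\Tilde{M}}^{-2}(\xi^*_{-\BS{0}},\BS{\Tilde{\beta}})\BS{f}_{\BS{\Tilde{\beta}}}(\BS{x})+T_1(\BS{x},\BS{\Tilde{\beta}})\le\widetilde{\tau}$ for all $\BS{x}$.

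With that equivalence in hand the two parts follow exactly as before. For part (1), local A-optimality of $\xi^*$ gives the displayed inequality for all $\BS{x}\in\mathcal{X}$; since $T_1(\BS{x},\BS{\Tilde{\beta}})\ge 0$ on $\widetilde{\mathcal{X}}$ by hypothesis, dropping the $T_1$ term yields the condition $\BS{f}_{\BS{\Tilde{\beta}}}^\trp(\BS{x})\BS{\Tilde{M}}^{-2}(\xi^*_{-\BS{0}},\BS{\Tilde{\beta}})\BS{f}_{\BS{\Tilde{\beta}}}(\BS{x})\le\widetilde{\tau}$, which by (\ref{Equiv-A}) is A-optimality of $\xi^*_{-\BS{0}}$ for the no-intercept model. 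For part (2), assuming A-optimality of $\xi^*_{-\BS{0}}$, the stated inequality of the corollary is exactly what forces the displayed inequality to hold on all of $\mathcal{X}$; equality is attained at every support point of $\xi^*_{-\BS{0}}$ (where $T_2=0$) and at the origin, so the equivalence condition for the intercept model is met and $\xi^*$ is A-optimal.

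The only point requiring care, and the one I expect to be the crux, is justifying the dictionary itself: that the General Equivalence Theorem for the homoscedastic nonlinear model (\ref{eq2.6}) carries the same sensitivity-function form as (\ref{Equiv-A}), with the gradient $\BS{f}_{\BS{\beta}}$ playing the role of $u^{\frac{1}{2}}\BS{f}$. This is standard for least-squares nonlinear regression, but it must be invoked explicitly so that the algebraic reduction transported from Theorem \ref{theo3.3.2.} is legitimate; once it is granted, everything reduces to a mechanical specialization and no further work is needed.
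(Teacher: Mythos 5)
Your proposal is correct and follows exactly the route the paper intends: the paper declares Corollary \ref{cor3.2} ``immediate'' from Section 3 because the nonlinear model's augmented information vector $(1,\BS{f}_{\BS{\beta}}^\trp(\BS{x}))^\trp$ is the GLM template with $\Tilde{u}\equiv 1$, and your explicit substitution $\Tilde{u}_{0}=1$, $\Tilde{u}^{\frac{1}{2}}(\BS{x},\BS{\Tilde{\beta}})=1$ into (\ref{eq3.15}), (\ref{eq3.20}), (\ref{eq3-inv}) and Theorem \ref{theo3.3.2.} reproduces the stated weight, $T_1$ and $T_2$ verbatim. The one point you flag as needing care (the equivalence-theorem form for homoscedastic nonlinear least squares with the gradient in place of $u^{\frac{1}{2}}\BS{f}$) is indeed the standard fact the paper implicitly relies on, so nothing is missing.
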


\begin{remark}
In view of  the assumptions of the previous corollaries $\BS{M}^{-1}(\xi,\BS{\Tilde{\beta}})$ is given by  (\ref{eq3.15}) where $\Tilde{u}_{0}$ vanishes. That is due to $\BS{c}^\trp\BS{\Tilde{m}}(\xi_{-\BS{0}},\BS{\Tilde{\beta}})=1$,    $\BS{\Tilde{M}}^{-1}(\xi_{-\BS{0}},\BS{\Tilde{\beta}})\BS{\Tilde{m}}(\xi_{-\BS{0}},\BS{\Tilde{\beta}})=\BS{c}$ thus  $\BS{\Tilde{m}}^\trp(\xi_{-\BS{0}},\BS{\Tilde{\beta}})  \BS{\Tilde{M}}^{-1}(\xi_{-\BS{0}},\BS{\Tilde{\beta}})\BS{\Tilde{m}}(\xi_{-\BS{0}},\BS{\Tilde{\beta}})=1$.
\end{remark}

%%%%%%%%%%%%%%%%%%%%%%%%%%%%%%%%%%%%%%%%%%%%%%

\end{document}